\theoremstyle{plain}
\theoremstyle{remark}
\newtheorem*{convention*}{Convention}
\theoremstyle{plain}
\newtheorem{theorem}{Theorem}[section]
\newtheorem{corollary}[theorem]{Corollary}
\theoremstyle{definition}
\newtheorem{definition}[theorem]{Definition}
\newtheorem{example}[theorem]{Example}
\newcommand{\bbZ}{{\mathbb Z}}
\newcommand{\bbR}{{\mathbb R}}
\DeclareMathOperator{\Inv}{Inv}
\title{Conley index methods detecting bifurcations in a modified van der Pol oscillator appearing in heart action models}
\author{Ibrahim Jawarneh\\
  Department of Mathematics\\
 Al-Hussein Bin Talal University\\
Ma'an, P.O. Box (20), 71111, Jordan\\
  \texttt{ibrahim.a.jawarneh@ahu.edu.jo} \\
   \And
  Ross Staffeldt \\
  Department of Mathematical Sciences\\
 New Mexico State University\\
 Las Cruces, NM 88003, USA\\
  \texttt{ross@nmsu.edu} \\
}
\begin{document}
\maketitle

\begin{abstract}
In this paper, a modified van der Pol oscillator equation is considered which appears in several heart action models. We study its global dynamics and verify many interesting bifurcations such as a Hopf bifurcation, a heteroclinic saddle connection, and a homoclinic saddle connection. Some of these bifurcations are detected by using Conley index methods. We demonstrate how the study of connection matrices and transition matrices shows how to select interesting parameter values for simulations.
\end{abstract}

\keywords{Conley index and connection matrices \and  Hopf bifurcation \and  heteroclinic saddle connection \and homoclinic saddle connection  \and modified van der Pol oscillator \and  heart action models.}

\section{Introduction}\label{Introduction}
The conducting system of the human heart contains a sino-atrial node (SA), an atrio-ventricular node (AV), and the His-Purkinje system \cite{Johnson1997}. Since these elements exhibit oscillatory behaviour, they can be modeled as  nonlinear oscillators \cite{Bub1995}. This approach is not suitable for the investigation of the cardiac conducting system at a cellular level, but it allows a global analysis of heartbeat dynamics by investigating interactions between the elements of the system. 

Several models of the cardiac pacemaker are based on the relaxation van der Pol oscillator \cite{van1928}, given by
\begin{equation} \label{BasicvdPheart}
\frac{d^2x}{dt^2} + \alpha(x^2 - 1) \frac{dx}{dt} + \omega^2 x = 0,
\end{equation}
where $\alpha$ is the damping constant and $\omega$ is the frequency. Since the work of van der Pol and van der Mark \cite{van1928}, other models have been proposed based on relaxation oscillators \cite{West1985,Katholi1977,Bernardo1998}, and mainly focused on the interaction between AV and SA nodes. Although they provide many interesting results, these models are not able to reproduce some important features of real cardiac action potentials. For this reason a modified van der Pol  oscillator is proposed in \cite{Postnov1999} which focused on the synchronization of oscillators. In this oscillator the harmonic force $\omega^2 x$ has been replaced by a cubic Duffing term, and is given by 
\begin{equation} \label{mvpsystem,d,2d}
\frac{d^{2}x}{dt^{2}} + \alpha(x^{2}- \theta) \frac{dx}{dt}+ \frac{x(x+d)(x+2d)}{d^{2}}=0, \quad  d,\theta, \alpha > 0, \quad \theta < d.
\end{equation}
This equation has an unstable focus at $x = 0$, a saddle at $x = -d$, and a stable node at $x = -2d$. Although the system undergoes a global homoclinic bifurcation for values of $\theta$  high enough, the distance between the saddle and the node cannot be changed.

In \cite{Krzysztof2004} a model has been developed which is more physiologically accurate, and thus can be used as a basis for further investigation of the dynamics of the heartbeat. This model is based on a modified van der Pol  oscillator \eqref{mvpsystem,d,2d} by introducing a new parameter $e$ as follows:
\begin{equation} \label{mvpsystem,d,e}
\frac{d^{2}x}{dt^{2}} + \alpha(x^{2}- \theta) \frac{dx}{dt}+ \frac{x(x+d)(x+e)}{de}=0, \quad d,e,\theta, \alpha>0, \quad \theta < d.
\end{equation}
This modification does not change the qualitative structure of the phase space of the system. 
The change of parameter $e$ results in a change of the depolarization period. When the nodal point approaches the saddle, then the trajectory tends to spend more time in the vicinity of the saddle, and vice versa.  When the node is moved away from the saddle, the intervals between action potentials become shorter. Varying the parameter $\alpha$ slightly impacts the period of oscillations by a small amount, and varying $\alpha$ over a wide range also changes the shape of the oscillations.  We take $\alpha = 1$ to make our work with the equilibria easier.

We extend the range of the parameters $d,e$ and $\theta$ to get interesting qualitative results, e.g., Hopf bifurcations, and to use Conley index methods to detect heteroclinic saddle connections and homoclinic saddle connections.

In this paper we consider system (\ref{mvpsystem,d,e})  with $\alpha = 1$, $ \theta \in \bbR$, $d,e \in \bbR \setminus \{0\}$   where $\left|d \right| \leq \left| e \right| $, and $d \neq e$ as we aim to get three distinct equilibrium points. Then our new  modified van der Pol oscillator equation is
\begin{equation} \label{mvpsystem extended}
\frac{d^{2}x}{dt^{2}} + (x^{2}- \theta) \frac{dx}{dt}+ \frac{x(x+d)(x+e)}{de} = 0.
\end{equation}

Equation \eqref{mvpsystem extended} can be written  as a first order system
\begin{equation} \label{mvpsystem extended 1st order}
   \begin{aligned} 
    \frac{dx}{dt}& = y,   
    \\
    \frac{dy}{dt}& = - (x^{2}- \theta) y - \frac{x(x+d)(x+e)}{de}.
\end{aligned} 
\end{equation}

This paper is organized as follows$\colon$ In the next section, we study the stability of the equilibria of system \eqref{mvpsystem extended 1st order}.  In section \ref{The Conley index and Connection Matrices} we recall the Coley index theory, connection matrices, transition matrices and some related corollaries and theorems. Section \ref{Bifurcations and Conley index} deals with bifurcations of the model through the cases of the condition $\left|d \right| \leq \left| e \right| $ with $d \neq e$. Numerical examples of the model are provided explaining prospective bifurcations like heteroclinic saddle connections and homoclinic saddle connections. Then methods using the Conley index are applied to detect the bifurcations. 

\section{Steady states and  their stability}\label{steady state}
To study the stability of the equilibria of system \eqref{mvpsystem extended 1st order}, note that its Jacobian matrix takes the form
\begin{equation} \label{Jacobian Heart}
J =
  \begin{bmatrix}
  0 & 1 \\
-2xy - \frac{1}{de}\Big((x+d)(x+e) + x(x+e) + x(x+d) \Big) &\bigl(\theta -x^2 \bigr)
\end{bmatrix}  .
\end{equation}
The equilibria of system \eqref{mvpsystem extended 1st order}  are  solutions of the  equations
 \begin{align} 
    0  & = y,  \label{mvpsystem extended 1st order 1st eqn}
    \\
    0  & = - (x^{2}- \theta) y - \frac{x(x+d)(x+e)}{de}, \label{mvpsystem extended 1st order 2nd eqn}
\end{align}  
and are given by $E_0 =(0,0)$, $E_1 = (-d,0)$, and $E_2 = (-e,0)$.

We first consider the stability $E_0$.  We will require the following theorem, which gives a Lyapunov test for a weak sink or source in  a Hopf bifurcation and  determines the stability of the system at particular values of $\theta$.
\begin{theorem}[Theorem 9.2.3, \cite{Hubbard1995}] \label{Liapunovtest for weak sink or source in Hopf} 
 If $X' = f_{\alpha} (X)$ is a one-parameter family of differential equations in the plane such that for $\alpha = \alpha_0$ the equations can be written 
 \begin{align*}
      x'& =  y + \mu_{2,0}x^2 + \mu_{1,1}xy + \mu_{0,2}y^2 + \mu_{3,0}x^3 + \mu_{2,1}x^2y+\mu_{1,2}xy^2+\\
      &\mu_{0,3}y^3 + \cdots,
      \\
      y'& =   -x + \nu_{2,0}x^2 + \nu_{1,1}xy + \nu_{0,2}y^2 + \nu_{3,0}x^3 + \nu_{2,1}x^2y+\nu_{1,2}xy^2+\\
      &\nu_{0,3}y^3 + \cdots,
 \end{align*}
 define the Lyapunov coefficient
 \begin{align}
    L &\equiv  3\mu_{3,0} + \mu_{1,2} + \nu_{2,1} +3\nu_{0,3} - \mu_{2,0}\mu_{1,1} +\nu_{1,1}\nu_{0,2} \notag
    \\
    &-2\mu_{0,2}\nu_{0,2} -\mu_{0,2}\mu_{1,1} +2\mu_{2,0}\nu_{2,0} + \nu_{1,1}\nu_{2,0}.
 \end{align} \label{Liapunov coefficient}
 The sign of $L$ determines behavior of solutions near the equilibrium at the origin.
 \begin{enumerate}
     \item If $L$ is positive, the origin is a weak source for $\alpha = \alpha_0$, and if $L$ is negative, the origin is a weak sink. If $L$ is zero, you cannot tell.
     \item If $L > 0$, the differential equation will have an unstable limit cycle for all values of $\alpha$ near $\alpha_0$ for which the origin is a sink, and if $L < 0$, it will have a stable limit cycle for all values of $\alpha$ near $\alpha_0$ for which the origin is a source. \qed
 \end{enumerate}
\end{theorem}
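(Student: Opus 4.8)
\medskip
\noindent\emph{Proof proposal.} The plan is to use the classical Lyapunov--function (equivalently, Poincar\'e--Birkhoff normal form) argument for a planar vector field whose linear part at $\alpha=\alpha_0$ is the pure rotation $(x,y)\mapsto(y,-x)$. I would seek a Lyapunov function
\begin{equation*}
V(x,y)=\tfrac12\bigl(x^2+y^2\bigr)+V_3(x,y)+V_4(x,y)+\cdots,
\end{equation*}
with $V_k$ homogeneous of degree $k$, and compute $\dot V=\nabla V\cdot f_{\alpha_0}$ degree by degree. The decisive algebraic input is the operator $\mathcal L W:=y\,W_x-x\,W_y$, the Lie derivative of a function along the linear rotation: after setting $z=x+iy$ it is diagonal on the monomials $z^p\bar z^q$ with eigenvalue $i(q-p)$, so it is invertible on homogeneous polynomials of odd degree, while on degree four it has exactly a one--dimensional kernel and cokernel, both spanned by $z^2\bar z^2=(x^2+y^2)^2$.

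Running the recursion, the quadratic part of $\dot V$ vanishes identically since $(x,y)\cdot(y,-x)=0$; the cubic part is $\mathcal L V_3$ plus a cubic determined by the quadratic coefficients $\mu_{i,j},\nu_{i,j}$, so $V_3$ can be chosen to kill it; the quartic part is then $\mathcal L V_4$ plus an explicit quartic $R_4$ built from the cubic coefficients and from $V_3$, that is, from all the $\mu_{i,j},\nu_{i,j}$. Projecting $R_4$ onto the cokernel of $\mathcal L$ (equivalently, averaging it over a circle) leaves a term $c\,L\,(x^2+y^2)^2$ with $c>0$, and after choosing $V_4$ to remove the range part one gets $\dot V=c\,L\,(x^2+y^2)^2+O(|X|^5)$ near the origin. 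This makes $V$ a strict Lyapunov function: $L<0$ forces $\dot V<0$ on a punctured neighbourhood, so the origin is asymptotically stable (a weak sink); $L>0$ gives instability (a weak source); and $L=0$ removes the decisive term, so this order of the expansion is silent. This is assertion (1). The step I expect to be the real work --- and the only place an error can creep in --- is the quartic bookkeeping: carrying $V_3$ through and checking that the circle average of $R_4$ is precisely the constant $L$ written in the statement. An independent check I would run is the Poincar\'e--map route: pass to polar coordinates, replace time by the polar angle, expand the displacement map $r_0\mapsto P(r_0)-r_0$, note that its quadratic coefficient integrates to zero over one turn, and identify the cubic coefficient with a positive multiple of $L$.

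For assertion (2), take $\alpha$ near $\alpha_0$ with the origin a hyperbolic focus; its eigenvalues are $\sigma(\alpha)\pm i\,\omega(\alpha)$ with $\omega$ near $1$ and $\sigma$ small of a fixed sign. Since the coefficients of $f_\alpha$ vary continuously with $\alpha$, repeating the construction (now normalizing the linear part to $\begin{bmatrix}\sigma&\omega\\-\omega&\sigma\end{bmatrix}$) produces a $V$ with $\dot V=\sigma\,(x^2+y^2)+c\,L\,(x^2+y^2)^2+O(|X|^5)$, $c>0$, uniformly for $\alpha$ near $\alpha_0$. If $L<0$ and $\sigma>0$ (origin a source), $\dot V$ is positive on a small circle and negative on a circle of radius of order $\sqrt{\sigma/(c|L|)}$, so the annulus they bound is positively invariant and equilibrium--free; Poincar\'e--Bendixson then yields a periodic orbit, and the signs of $\dot V$ on the two boundaries force it to be attracting --- a stable limit cycle. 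If $L>0$ and $\sigma<0$ (origin a sink), the same annulus is negatively invariant, so applying Poincar\'e--Bendixson in reversed time gives a periodic orbit that repels in forward time --- an unstable limit cycle. The obstacle here is purely quantitative: one must make the remainder estimate and the radii of the trapping annulus uniform on a whole neighbourhood of $\alpha_0$, which is exactly what the continuity of the coefficients together with $L\neq0$ at $\alpha_0$ provides.
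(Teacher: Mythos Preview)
The paper does not prove this statement at all: Theorem~\ref{Liapunovtest for weak sink or source in Hopf} is quoted verbatim from Hubbard--West \cite{Hubbard1995} as an external tool (note the terminal \qedsymbol\ with no argument), and is then applied once in the proof of Theorem~\ref{stability E_0 Heart}. So there is no in-paper proof to compare your proposal against.

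That said, your outline is the standard and correct route to this result. The Lyapunov-function construction with the rotation operator $\mathcal L$, the identification of $(x^2+y^2)^2$ as the generator of $\ker\mathcal L$ in degree four, and the resulting expansion $\dot V = cL(x^2+y^2)^2 + O(|X|^5)$ is exactly the Poincar\'e--Lyapunov argument Hubbard and West carry out; your alternative check via the return map in polar coordinates is likewise classical and equivalent. For part~(2), your trapping-annulus argument with Poincar\'e--Bendixson is again the textbook mechanism, and your caveat about uniformity in $\alpha$ is the right point to flag. The only place your sketch is genuinely incomplete is the one you already name: the quartic bookkeeping that pins the averaged coefficient to the specific ten-term expression for $L$ in the statement. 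That computation is tedious but mechanical; nothing in your plan would fail.
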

The Jacobian matrix evaluated at the equilibrium point $E_0 = (0,0)$ is
\begin{equation} \label{JacobianE_0 Heart}
J(E_0) =
  \begin{bmatrix}
  0 &  1\\
-1 & \theta
\end{bmatrix}  .
\end{equation}
We summarize the stability of $E_0$ in the following theorem.
\begin{theorem} \label{stability E_0 Heart}
 The equilibrium point $E_0 = (0,0)$ is unstable if $\theta > 0$, and stable if $\theta \leq 0$. Moreover, there is a stable limit cycle  for all values of $\theta$ near $\theta = 0$ for which the equilibrium point $E_0 = (0,0)$ is a source.
\end{theorem}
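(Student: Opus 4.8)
The plan is to split the argument into the hyperbolic cases $\theta \neq 0$, handled by the linearization, and the degenerate case $\theta = 0$, handled by the Lyapunov test of Theorem~\ref{Liapunovtest for weak sink or source in Hopf}. From \eqref{JacobianE_0 Heart} the characteristic polynomial of $J(E_0)$ is $\lambda^2 - \theta\lambda + 1$, so $\operatorname{tr} J(E_0) = \theta$ and $\det J(E_0) = 1 > 0$; the eigenvalues are $\tfrac{\theta}{2} \pm \tfrac12\sqrt{\theta^2 - 4}$, each with real part $\theta/2$. Hence for $\theta > 0$ both eigenvalues have positive real part and $E_0$ is an unstable source (node or spiral according as $\theta \geq 2$ or $0 < \theta < 2$), while for $\theta < 0$ both have negative real part and $E_0$ is asymptotically stable. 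Since the real part $\theta/2$ has nonzero derivative at $\theta = 0$, the eigenvalues cross the imaginary axis transversally there, which is exactly the Hopf scenario.

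For $\theta = 0$ the eigenvalues are $\pm i$ and the linearization is inconclusive, so I would apply Theorem~\ref{Liapunovtest for weak sink or source in Hopf} with $\alpha = \theta$ and $\alpha_0 = 0$. The crucial observation is that at $\theta = 0$ the system \eqref{mvpsystem extended 1st order} is already in the normal form demanded by that theorem: its linear part is precisely $\begin{bmatrix} 0 & 1 \\ -1 & 0 \end{bmatrix}$, so no preliminary linear change of coordinates is needed. Expanding the nonlinearity as $\frac{x(x+d)(x+e)}{de} = x + \frac{d+e}{de}\,x^2 + \frac{1}{de}\,x^3$, the equations at $\theta = 0$ become $x' = y$ and $y' = -x - \frac{d+e}{de}x^2 - \frac{1}{de}x^3 - x^2 y$. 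Therefore every coefficient $\mu_{i,j}$ vanishes and, among the $\nu_{i,j}$, only $\nu_{2,0} = -\frac{d+e}{de}$, $\nu_{3,0} = -\frac{1}{de}$, and $\nu_{2,1} = -1$ are nonzero. Substituting into the formula for the Lyapunov coefficient $L$ in Theorem~\ref{Liapunovtest for weak sink or source in Hopf}, every product term carries a vanishing factor, so $L = \nu_{2,1} = -1 < 0$, independent of $d$ and $e$. Part~(1) of the theorem then says the origin is a weak sink at $\theta = 0$; combined with the linear analysis this yields stability for all $\theta \leq 0$. Part~(2), since $L < 0$, produces a stable limit cycle for all $\theta$ near $0$ for which $E_0$ is a source, i.e.\ for small $\theta > 0$, which is the final assertion.

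I do not expect a genuine obstacle. The only care needed is in the algebra: correctly expanding the cubic term, matching it against the long template in Theorem~\ref{Liapunovtest for weak sink or source in Hopf}, and verifying that $L$ collapses to $\nu_{2,1}$. It is worth flagging explicitly that the hypotheses of that theorem hold verbatim because the linear part at $\theta = 0$ is already $\begin{bmatrix} 0 & 1 \\ -1 & 0 \end{bmatrix}$, and that ``weak sink'' means asymptotic stability, so the borderline value $\theta = 0$ genuinely belongs to the stable regime.
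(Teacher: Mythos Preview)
Your proposal is correct and follows essentially the same route as the paper: linearization for $\theta\neq 0$ via the eigenvalues of $J(E_0)$, then Theorem~\ref{Liapunovtest for weak sink or source in Hopf} at $\theta=0$ to obtain $L=-1$ and conclude both the weak-sink stability and the existence of a stable limit cycle on the source side. One small slip: the claim ``each with real part $\theta/2$'' is only literally true when $|\theta|<2$; for $|\theta|\ge 2$ the eigenvalues are real, but since $\det J(E_0)=1>0$ they share the sign of the trace $\theta$, so your conclusion stands.
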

\begin{proof}
From the Jacobian matrix \eqref{JacobianE_0 Heart}, the eigenvalues of $J(E_0)$ are
\begin{equation} \label{eigenvalues of E_0 Heart}
 \lambda_{1,2} = \frac{\theta \pm \sqrt{\theta^2 - 4}}{2}. 
\end{equation}
Notice that the eigenvalues of the equilibrium point $E_0 = (0,0)$ depend only on the parameter $\theta$. When $ \theta < 0$, the real parts of the eigenvalues are negative, so $E_0$ is stable. Similarly, when  $\theta  > 0$, the real parts of the eigenvalues are positive, and $E_0$ is unstable. 

For $\theta = 0$, we apply theorem \ref{Liapunovtest for weak sink or source in Hopf}.  System \eqref{mvpsystem extended 1st order} can be written as
\begin{equation} \label{our system at theta = 0}
    \begin{aligned} 
    \frac{dx}{dt}& = y,   
    \\
    \frac{dy}{dt}& = -x - \frac{(d+e)x^2}{de} - \frac{x^3}{de} - x^2 y.
\end{aligned} 
\end{equation}
and we find the Lyapunov coefficient provided by theorem \ref{Liapunovtest for weak sink or source in Hopf} is $ L = -1 < 0$. This tells us that $E_0$ is a weak sink for $\theta = 0$, so it is stable.  Moreover, the system \eqref{mvpsystem extended 1st order} will have a stable limit cycle for all values of $\theta$ near $\theta = 0$ for which $E_0$ is a source.
\end{proof}

We summarize the stability of $E_{1}=(-d,0)$ in the following theorem.
\begin{theorem}  \label{stability E_1 Heart}
If  $\theta - d^2 \neq 0$, then the equilibrium point $E_1 = (-d,0)$ is  a saddle.
\end{theorem}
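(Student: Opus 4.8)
The plan is to linearize the system at $E_1 = (-d,0)$ and show that the resulting Jacobian has negative determinant; this alone forces two real eigenvalues of opposite sign, i.e. a hyperbolic saddle, so no discriminant bookkeeping is really needed.

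First I would substitute $x = -d$, $y = 0$ into the general Jacobian \eqref{Jacobian Heart}. The factor $x+d$ vanishes, which kills the $-2xy$ term and the first and third summands inside the parenthesis, leaving only $x(x+e) = (-d)(e-d) = d^2 - de$. This yields
\begin{equation*}
J(E_1) = \begin{bmatrix} 0 & 1 \\ \dfrac{e-d}{e} & \theta - d^2 \end{bmatrix},
\end{equation*}
so $\operatorname{tr} J(E_1) = \theta - d^2$, $\det J(E_1) = -\dfrac{e-d}{e} = \dfrac{d-e}{e}$, and the characteristic polynomial is $\lambda^2 - (\theta - d^2)\lambda + \dfrac{d-e}{e}$.

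Next I would verify that $\det J(E_1) = \dfrac{d-e}{e} < 0$ from the standing hypotheses $|d| \leq |e|$, $d \neq e$, $d,e \neq 0$, via a short case split on the sign of $e$. If $e > 0$, then $|d| \leq e$ gives $d \leq |d| \leq e$, and $d \neq e$ sharpens this to $d < e$, so $d - e < 0$ while $e > 0$. If $e < 0$, then $|d| \leq |e| = -e$ gives $d \geq -|d| \geq e$, and $d \neq e$ sharpens this to $d > e$, so $d - e > 0$ while $e < 0$. In either case $\dfrac{d-e}{e} < 0$. Consequently the discriminant $(\theta - d^2)^2 - 4\,\dfrac{d-e}{e} = (\theta - d^2)^2 + 4\,\dfrac{e-d}{e}$ is strictly positive, the eigenvalues $\lambda_{1,2}$ are real, and since their product equals the negative determinant we get $\lambda_1 < 0 < \lambda_2$. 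Hence $E_1$ is a saddle.

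I do not expect any genuine obstacle here — the computation is elementary. The one point worth flagging is that the hypothesis $\theta - d^2 \neq 0$ is not actually used in reaching the saddle conclusion: when $\theta = d^2$ the trace vanishes but the eigenvalues are still $\pm\sqrt{(e-d)/e}$, real, nonzero, and of opposite sign. So in my write-up I would either drop that hypothesis or note explicitly that it is retained only for compatibility with the later bifurcation analysis, where the sign of $\operatorname{tr} J(E_1) = \theta - d^2$ (and its vanishing) becomes relevant.
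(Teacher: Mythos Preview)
Your argument is correct and tracks the paper's proof closely: both compute $J(E_1)$ and reduce the saddle claim to the inequality $1 - d/e > 0$ (equivalently $(d-e)/e < 0$) under $|d|\le|e|$, $d\neq e$. The only difference is packaging: the paper writes out the eigenvalues explicitly and bounds the square root by $|\theta-d^2|$, whereas you invoke the determinant criterion ($\det J(E_1)<0 \Rightarrow$ real eigenvalues of opposite sign) and thereby sidestep the discriminant bookkeeping. Your observation that the hypothesis $\theta - d^2 \neq 0$ is not actually used is also correct --- the paper's own inequalities go through verbatim when $\theta = d^2$.
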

\begin{proof}
The Jacobian matrix evaluated at $E_1 = (-d,0)$ is
\begin{equation} \label{JacobianE_1 Heart}
J(E_1) =
  \begin{bmatrix}
  0 &  1\\
(e-d)/e & \theta -d^2
\end{bmatrix},
\end{equation}
and so the eigenvalues of $J(E_1)$ are
\begin{equation} \label{eigenvalues of E_1 Heart}
 \lambda_{1,2} = \frac{(\theta -d^2)\pm 
 \sqrt{(\theta -d^2)^2 + 4\bigl(1 - (d/e)\bigr)}}{2}. 
\end{equation}
Since in system \eqref{mvpsystem extended 1st order} we considered $\left|d \right| \leq \left| e \right|$ and $d \neq e$, we get $ - \left|d \right|/  \left| e \right| \geq -1$, so $(1 - d/e) \geq (1 - \left|d \right|/  \left| e \right|) \geq 0 $. Then  the term $4\bigl(1 - (d/e)\bigr) > 0$, which implies 
\begin{equation*}
  \sqrt{(\theta -d^2)^2 + 4\bigl(1 - (d/e)\bigr)} > \left| \theta -d^2 \right| \geq 0.  
\end{equation*}
 So the smaller eigenvalue
 \begin{equation}
\lambda_{1} = \frac{(\theta -d^2) - \sqrt{(\theta -d^2)^2 + 4\bigl(1 - (d/e)\bigr)}}{2} < \frac{(\theta -d^2) - \left| \theta -d^2 \right| }{2} \leq 0.    
\end{equation}
Since the larger eigenvalue is always positive, the equilibrium point $E_1 = (-d,0)$ is a saddle.
\end{proof}
Finally, We summarize the stability of $E_{2}$ in the following theorem.
\begin{theorem} \label{stability E_2 Heart}
If  $\theta - e^2 \neq 0$, then the equilibrium point $E_2 =(-e,0)$ is
\begin{enumerate}
    \item a saddle,  if $d$ and $e$ have different signs.
    \item  stable,  if $d$ and $e$ have the same sign with $\theta - e^2 < 0$.
    \item unstable,  if $d$ and $e$ have the same sign with $\theta - e^2 > 0$.
\end{enumerate}
\end{theorem}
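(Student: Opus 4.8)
The plan is to follow the template used for $E_1$ in Theorem~\ref{stability E_1 Heart}: evaluate the Jacobian \eqref{Jacobian Heart} at $E_2 = (-e,0)$, read off its trace and determinant, and then split into the three sign regimes. Setting $x=-e$, $y=0$, every summand of the $(2,1)$ entry of \eqref{Jacobian Heart} vanishes except $-\tfrac{1}{de}\,(-e)(-e+d)$, which equals $(d-e)/d = 1 - e/d$. So I expect
\begin{equation*}
J(E_2) = \begin{bmatrix} 0 & 1 \\ (d-e)/d & \theta - e^2 \end{bmatrix},
\qquad
\lambda_{1,2} = \frac{(\theta - e^2) \pm \sqrt{(\theta - e^2)^2 + 4\bigl(1 - e/d\bigr)}}{2},
\end{equation*}
with trace $\theta - e^2$ and determinant $(e-d)/d = e/d - 1$ (the mirror image of the $E_1$ formula under $d\leftrightarrow e$). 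The whole argument then reduces to controlling the sign of $e/d$ using the standing hypotheses $|d|\le|e|$ and $d\ne e$ from \eqref{mvpsystem extended 1st order}.

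For part~(1), if $d$ and $e$ have opposite signs then $e/d<0$, so the determinant $e/d-1<-1<0$; a $2\times 2$ real matrix with negative determinant has two real eigenvalues of opposite sign, hence $E_2$ is a saddle. Equivalently, exactly as in the proof of Theorem~\ref{stability E_1 Heart}, $4(1-e/d)>0$ makes the radical exceed $|\theta-e^2|$, forcing $\lambda_1<0<\lambda_2$.

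For parts~(2) and~(3), if $d$ and $e$ have the same sign then $e/d=|e|/|d|\ge 1$, and $d\ne e$ upgrades this to $e/d>1$, so the determinant $e/d-1>0$. Then the eigenvalues are either real with a common sign or a complex conjugate pair, and in either case $\operatorname{sgn}(\operatorname{Re}\lambda_{1,2})=\operatorname{sgn}(\theta-e^2)$: for a complex pair $\operatorname{Re}\lambda=(\theta-e^2)/2$, and for a real pair the positive product together with the sign of the sum pins down the common sign. Invoking the hypothesis $\theta-e^2\ne 0$ makes the linearization hyperbolic, so $E_2$ is stable when $\theta-e^2<0$ and unstable when $\theta-e^2>0$.

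The computation is elementary throughout, so I do not anticipate a genuine obstacle; the one point that needs care is the sign bookkeeping for $e/d$ under $|d|\le|e|$ and $d\ne e$ — in particular noticing that the "same sign" case forces $e/d>1$ strictly, which is exactly what both guarantees a positive determinant and, together with $\theta-e^2\ne0$, lets the trace alone decide stability in parts~(2) and~(3).
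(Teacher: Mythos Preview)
Your proof is correct and follows essentially the same approach as the paper: compute $J(E_2)$, write down the eigenvalue formula, and split into cases according to the sign of $e/d$ under the standing hypotheses $|d|\le|e|$, $d\neq e$. Your trace/determinant phrasing in parts~(2) and~(3) is a slightly cleaner repackaging of the paper's direct observation that $4(1-e/d)<0$ makes the radical smaller than $|\theta-e^2|$, but the substance is identical.
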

\begin{proof}
The Jacobian matrix evaluated at $E_2 = (-e,0)$ is
\begin{equation} \label{JacobianE_2 Heart}
J(E_2) =
  \begin{bmatrix}
  0 &  1\\
(d-e)/d & \theta -e^2
\end{bmatrix}.
\end{equation}
Thus the eigenvalues of $J(E_2)$ are
\begin{equation} \label{eigenvalues of E_2 Heart}
 \lambda_{1,2} = \frac{(\theta -e^2)\pm \sqrt{(\theta - e^2)^2 + 4\bigl(1 - (e/d)\bigr)}}{2}. 
\end{equation}
As $\left|d \right| \leq \left| e \right|$ and $d \neq e$, 
we have three cases:

First, if  $d$ and $e$ have different signs, $4\bigl(1 - (e/d)\bigr) > 0$. Then the term 
\begin{equation*}
    \sqrt{(\theta -e^2)^2 + 4\bigl(1 - (e/d)\bigr)} > \left| \theta -e^2 \right| > 0.
\end{equation*}
The smaller eigenvalue
\begin{equation}
\lambda_{1} = \frac{(\theta - e^2) - \sqrt{(\theta - e^2)^2 + 4\bigl(1 - (e/d)\bigr)}}{2} < \frac{(\theta -e^2) - \left| \theta - e^2 \right| }{2} \leq 0.  
\end{equation}
Since  larger eigenvalue is always positive, the equilibrium point $E_2 = (-e,0)$ is a saddle.

When $d$ and $e$ have the same sign the term  $4\bigl(1 - (e/d)\bigr) < 0$ so the other two cases are happened as follows$\colon$

For  $\theta - e^2 < 0$, the real parts of the eigenvalues are negative. Thus, the equilibrium point $E_2 = (-e,0)$ is stable. If $\theta - e^2 > 0$ then the real parts of the eigenvalues are positive. Therefore, the equilibrium point $E_2 = (-e,0)$ is unstable.
\end{proof}

\section{Conley index methods}
\label{The Conley index and Connection Matrices}
In this section we state  basic definitions and theorems that are related to the connection matrices and methods of Conley index to detect some interesting bifurcations like heteroclinic saddle connection and homoclinic saddle connection. For a more complete discussion of  connection matrices  and  the Conley index the reader is referred to \cite{Conley78,Salamon85,Franzosa1,Franzosa2,Franzosa3,Reineck88,Mischaikow88,Jawarneh18}.

Throughout this paper,  $S$ will always denote an isolated invariant set and  the homological Conley index of $S$, $CH_q(S)$, is the relative homology $H_q(N,L)$, where $q$ is a homology index, $(N,L)$ is  an index pair of $S$ where $L \subset N \subset \mathbb R^n$, such that
\begin{enumerate}
\item $S = \Inv(\overline{N\setminus L})$, and $\overline{N\setminus L}$ is an isolating neighborhood  of $S$ in  $X$;
\item $L$ is positively invariant in $N$; that is, given $x \in L$ and $\varphi ( x,[0, t]) \subset N$, then $\varphi ( x,[0, t]) \subset L$;
\item  $L$ is an exit set for $N$; that is given $x \in N$ and $t_{1} > 0$ such that     $\varphi(x,t_{1}) \not \in N$, then there exists $t_{0} \in  [0, t_{1}]$ such that $\varphi (x,[0, t_0] ) \subset N$ and $\varphi (x,t_{0}) \in L$.
\end{enumerate}

The homotopical Conley index of $S$, $h(S)$, is defined as the pointed homotopy type of the quotient space $N/L$. Some necessary features of the  Conley index are listed in the following theorems.
\begin{theorem}[Theorem 3.13, \cite{Mischaikow02}] 
\label{homology of unstable}
Let $p$ be a  hyperbolic equilibrium point with an unstable manifold of dimension $n$. Then
\begin{equation*}
    CH_i(p) \cong \Bigg\{ \begin{tabular}{cc} $\bbZ_2$, & \text{if $i = n$}, \\
  $0$, &  \text{otherwise. \qed}
  \end{tabular}   
\end{equation*}
\end{theorem}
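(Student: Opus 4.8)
The plan is to compute the index of the local linear model at $p$ by writing down an explicit product index pair, and then to transport the answer to the nonlinear flow by a continuation argument. Since $p$ is hyperbolic, the linearization $A = Df(p)$ has no eigenvalue on the imaginary axis, so $\bbR^N$ splits as $E^u \oplus E^s$ with $\dim E^u = n$, where $E^u$ and $E^s$ are the $A$-invariant sums of generalized eigenspaces for eigenvalues of positive and negative real part respectively. Equipping $E^u$ and $E^s$ with adapted (Lyapunov) inner products, one obtains a constant $c>0$ such that along the linear flow $\dot z = A z$, written $z = (z_u,z_s)$, the norm $|z_u(t)|$ is strictly increasing and $|z_s(t)|$ strictly decreasing (at exponential rate at least $c$) whenever the corresponding component is nonzero. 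Passing to these coordinates is harmless for the Conley index, which depends only on the flow in a neighborhood of the isolated invariant set and is invariant under topological conjugacy.

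For the linear flow I would then set $N = \{\, z = (z_u,z_s) : |z_u|\le 1,\ |z_s|\le 1 \,\}$ and $L = \{\, z \in N : |z_u| = 1 \,\}$. Monotonicity of $|z_u|$ and $|z_s|$ forces the only complete orbit contained in $N$ to be the fixed point $z=0$, so $\Inv(N) = \{p\}$ and $N$ is an isolating neighborhood. Checking the three index-pair axioms is then immediate: an orbit leaving $N$ must cross $|z_u|=1$, since it cannot leave through $|z_s|=1$, so $L$ is an exit set; and an orbit lying in $L$ that stays in $N$ for positive time would have to keep $|z_u|=1$, contradicting strict monotonicity unless the segment is trivial, so $L$ is positively invariant in $N$. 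Hence $CH_\ast(p) = H_\ast(N,L)$ and $h(p)$ is the pointed homotopy type of $N/L$.

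Topologically $(N,L) \cong (D^n \times D^{N-n},\, S^{n-1}\times D^{N-n})$, and since $D^{N-n}$ is contractible the inclusion $(D^n,S^{n-1}) \hookrightarrow (N,L)$ is a homotopy equivalence of pairs; therefore $CH_i(p) \cong H_i(D^n,S^{n-1}) \cong \tilde H_i(S^n)$, which with $\bbZ_2$ coefficients is $\bbZ_2$ for $i=n$ and $0$ otherwise (equivalently $h(p) \simeq S^n$). To descend from the linearization to the original vector field I would invoke continuation invariance of the Conley index along the family $f_\lambda(x) = (1-\lambda)f(x) + \lambda A(x-p)$, $\lambda \in [0,1]$: one has $Df_\lambda(p) = A$ for every $\lambda$, so $p$ remains a hyperbolic equilibrium with $n$-dimensional unstable manifold throughout, and a uniform-in-$\lambda$ estimate near $p$ makes a single (possibly shrunken) box $N$ an isolating neighborhood for the whole family, so the index is unchanged. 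I expect the main work to lie not in the homology computation, which is routine once the product index pair is in hand, but in the analytic bookkeeping: arranging the adapted norms so the boundary behavior of $N$ is as claimed even when $A$ has complex eigenvalues with nontrivial rotation, and making the uniform-in-$\lambda$ isolation in the continuation step precise.
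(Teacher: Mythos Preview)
Your argument is correct and is the standard textbook proof of this fact. However, note that the paper does not give its own proof of this statement: it is stated as Theorem~3.1 with an immediate \qed\ and attributed to Theorem~3.13 of \cite{Mischaikow02}, so there is no in-paper proof to compare against. Your write-up is essentially the proof one finds in the cited source: build a product index pair for the linearized flow using adapted norms, read off $H_\ast(D^n\times D^{N-n},\,S^{n-1}\times D^{N-n})\cong H_\ast(D^n,S^{n-1})\cong \tilde H_\ast(S^n)$, and pass to the nonlinear flow by continuation (or, equivalently, by observing that the same box is an isolating neighborhood for the nonlinear flow once it is chosen small enough). The caveats you flag at the end---arranging the Lyapunov inner products so that $|z_u|$ and $|z_s|$ are strictly monotone even in the presence of complex eigenvalues, and making the isolation uniform in the homotopy parameter $\lambda$---are the only places requiring care, and both are routine.
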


The following result provides a necessary condition for asymptotic stability of an equilibrium point by using the Conley index.

 \begin{theorem}[Theorem 9,  \cite{Moulay11}]  \label{Conley Index Condition for Equilibria}
(Conley Index Condition for Equilibria). Let  p be an asymptotically stable then
\begin{equation*}
    CH_i(p) \cong
\Bigg\{ 
\begin{tabular}{cc}
$\bbZ_2$, & \text{if $i = 0$,} 
\\
  $0$, &  \text{otherwise. \qed}
  \end{tabular}
\end{equation*}
 \end{theorem}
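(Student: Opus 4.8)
The plan is to show that an asymptotically stable rest point admits an index pair whose exit set is empty and whose total space is contractible, so that the Conley index homology collapses to the homology of a point.

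First I would convert the hypothesis into geometry. By the converse Lyapunov theorem, an asymptotically stable equilibrium $p$ carries a strict Lyapunov function $V\ge 0$ on a neighborhood of $p$ with $V(p)=0$, $V>0$ off $p$, and $\dot V<0$ off $p$. For $c>0$ small the sublevel set $N=\{V\le c\}$ is compact, contained in any prescribed neighborhood of $p$, and positively invariant, since $\dot V<0$ on $\partial N$ prevents trajectories from crossing the boundary outward. A standard argument with $V$ non-increasing along orbits shows that any point whose full orbit stays in $N$ must have $V$ constant along that orbit, hence must equal $p$; thus $\Inv(N)=\{p\}\subset\interior N$, so $N$ is an isolating neighborhood of $\{p\}$. (Alternatively one can build such an $N$ directly from Lyapunov stability together with attractivity, without a smooth Lyapunov function.)

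Next I would verify that $(N,\emptyset)$ is an index pair for $S=\{p\}$ in the sense recalled in the text. Condition (1) holds because $\Inv(\overline{N\setminus\emptyset})=\Inv(N)=\{p\}$ and $N$ is an isolating neighborhood; condition (2) is vacuous for $L=\emptyset$; and condition (3), that $\emptyset$ is an exit set, holds precisely because $N$ is positively invariant: there is no $x\in N$ and $t_1>0$ with $\varphi(x,t_1)\notin N$, so the implication is vacuously satisfied. Consequently $CH_i(p)=H_i(N,\emptyset)=H_i(N)$, homology taken with $\bbZ_2$-coefficients as throughout the paper. It then remains to compute $H_i(N)$, and for this I would exhibit a strong deformation retraction of $N$ onto $\{p\}$ by following the flow: set $H(x,t)=\varphi(x,t)$ for $t\in[0,\infty)$ and $H(x,\infty)=p$; positive invariance makes $H$ land in $N$, and $H(\cdot,0)=\id_N$, $H(\cdot,\infty)\equiv p$, $H(p,\cdot)\equiv p$. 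Reparametrizing $[0,\infty]$ to $[0,1]$ gives $N\simeq\{p\}$, whence $H_i(N)\cong H_i(\mathrm{pt})$, which is $\bbZ_2$ for $i=0$ and $0$ otherwise, as claimed.

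The main obstacle is really concentrated in two routine-but-essential points: producing the compact positively invariant isolating neighborhood $N$, and proving continuity of $H$ at the points $(x,\infty)$. The latter is where the full force of \emph{asymptotic} stability (as opposed to mere attractivity) is used, via uniform convergence $\varphi(x,t)\to p$ for $x$ ranging over the compact set $N$ combined with Lyapunov stability near $p$; once these are in hand the rest is formal.
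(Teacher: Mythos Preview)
The paper does not supply its own proof of this statement: it is quoted verbatim as Theorem~9 of \cite{Moulay11} and closed with \qed, so there is no argument in the paper to compare yours against. Your proof is the standard one and is correct; the construction of a compact positively invariant sublevel set $N$ via a converse Lyapunov function, the verification that $(N,\emptyset)$ is an index pair, and the flow-induced deformation retraction of $N$ onto $\{p\}$ are all sound. The one genuinely delicate point is exactly the one you flag, namely continuity of $H$ at $(x,\infty)$; this follows because asymptotic stability (Lyapunov stability plus attractivity) forces uniform attraction on the compact set $N$, so $\varphi(\cdot,t)\to p$ uniformly on $N$ as $t\to\infty$.
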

 
If there exists $i\geq$ 1 such that $CH_i(p) \cong \bbZ_2$ then $p$ cannot be asymptotically stable. The following corollary can be used to compute the Conley index of an hyperbolic equilibrium.

\begin{corollary} [Corollary 10, \cite{Moulay11}]
Let $p$ be a hyperbolic equilibrium point and let $\{\lambda _j \}_{1\leq j _\leq n}$ be the eigenvalues of the Jacobian matrix at $p$ such that
$Re(\lambda_q) > 0$ for all  $1\leq q \leq k \leq n$
then
\begin{equation*}
 CH_i(p) \cong \Bigg\{
  \begin{tabular}{cc}
$\bbZ_2$, & \text{if $i = k$}, \\
  $0$,   &  \text{otherwise}. \qed
  \end{tabular}   
\end{equation*}
\end{corollary}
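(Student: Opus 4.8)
The plan is to derive this corollary directly from Theorem \ref{homology of unstable}; the only thing requiring justification is that the dimension of the unstable manifold of $p$ equals the number $k$ of eigenvalues whose real part is positive.

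First I would use the hyperbolicity of $p$: by definition none of the eigenvalues $\lambda_j$ of the Jacobian at $p$ lies on the imaginary axis, so each $\lambda_j$ satisfies either $Re(\lambda_j) > 0$ or $Re(\lambda_j) < 0$. This partitions the $n$ eigenvalues, counted with algebraic multiplicity, into the $k$ eigenvalues with positive real part and the remaining $n - k$ with negative real part.

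Next I would invoke the stable manifold theorem for a hyperbolic equilibrium. The tangent space at $p$ decomposes as $E^u \oplus E^s$, where $E^u$ is spanned by the generalized eigenvectors associated to the eigenvalues with positive real part and $E^s$ by those with negative real part. The local unstable manifold $W^u(p)$ is tangent to $E^u$ at $p$, so that $\dim W^u(p) = \dim E^u = k$.

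Finally I would apply Theorem \ref{homology of unstable} with unstable manifold dimension equal to $k$, which gives $CH_i(p) \cong \bbZ_2$ for $i = k$ and $CH_i(p) \cong 0$ otherwise, as claimed. I do not expect a genuine obstacle here: the corollary is essentially a translation of Theorem \ref{homology of unstable} from the geometric language of the unstable manifold into the spectral language of the Jacobian, and the identification $\dim W^u(p) = k$ is standard dynamical systems theory.
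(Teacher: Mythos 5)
Your proof is correct and is essentially the intended argument: the paper states this corollary without proof (it is quoted from the cited reference of Moulay and Hui), and your derivation---hyperbolicity splits the spectrum off the imaginary axis, the stable manifold theorem identifies $\dim W^u(p) = k$ as the number of eigenvalues with positive real part, and then Theorem~\ref{homology of unstable} applies---is exactly the standard route by which this corollary follows from that theorem. No gaps; the only caution is notational, since the $n$ in Theorem~\ref{homology of unstable} (unstable manifold dimension) is not the $n$ of the corollary (total number of eigenvalues), which your write-up already handles correctly.
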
 

The Conley index of the periodic orbit is given in the following corollaries.
\begin{corollary}[Corollary 3.17, \cite{Mischaikow02}]
Let $S$ be a hyperbolic invariant set that is diffeomorphic to a circle. Assume that      $S$ has an oriented unstable
manifold of dimension $n+1$. Then 
\begin{equation*}
CH_i(S) \cong \Bigg\{
  \begin{tabular}{cc}
$\bbZ_2$, & \text{if $i= n, n+1$}, \\
  $0$, & \text{otherwise}. \qed
 \end{tabular}    
\end{equation*}
\end{corollary}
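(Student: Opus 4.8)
The plan is to build an explicit index pair for $S$ from the local structure of the flow near a hyperbolic periodic orbit, identify the resulting pointed quotient with the Thom space of the unstable normal bundle of $S$, and then read off the homology. Write $W^u(S)$ for the unstable manifold; by hypothesis $\dim W^u(S) = n+1$, so the unstable normal bundle $\eta \to S \cong S^1$ has rank $n$, and $W^u(S)$ is the total space of $\eta$. Since $\eta$ is an orientable rank-$n$ bundle over a circle, it is trivial: $\eta \cong S^1 \times \bbR^n$.

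First I would set up flow-box coordinates near $S$. By normal hyperbolicity and the stable/unstable manifold theorem for periodic orbits, there is a neighborhood $N$ of $S$ diffeomorphic to $S^1 \times D^n \times D^m$, in which $S = S^1 \times \{0\} \times \{0\}$, the flow progresses monotonically in the $S^1$-coordinate, expands in the $D^n$ (unstable) directions and contracts in the $D^m$ (stable) directions; triviality of $\eta$ lets one pick these coordinates so that the unstable directions carry no monodromy twist around $S$ (the linearized return map on $D^n$ lies in $GL^+_n(\bbR)$, which is path-connected). Next I would take $L = S^1 \times \partial D^n \times D^m$ and verify that $(N,L)$ is an index pair for $S$: $\overline{N \setminus L}$ is an isolating neighborhood of $S$ because any trajectory remaining in $N$ for all time has vanishing unstable and stable components and hence lies on $S$; $L$ is positively invariant in $N$ since the $D^n$-expansion keeps a point with $D^n$-component on $\partial D^n$ there until it leaves $N$; and $L$ is an exit set because a trajectory can only exit $N$ through the expanding $D^n$ directions. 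Consequently $h(S)$ is the pointed homotopy type of $N/L$.

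Finally I would compute. Collapsing the contractible stable factor $D^m$ gives
\begin{equation*}
N/L \;\simeq\; (S^1 \times D^n) / (S^1 \times \partial D^n),
\end{equation*}
which is exactly the Thom space of the trivial bundle $S^1 \times \bbR^n$, namely $\Sigma^n(S^1_+) \simeq S^n \vee S^{n+1}$. Hence $h(S) \simeq S^n \vee S^{n+1}$, and taking reduced homology with $\bbZ_2$ coefficients,
\begin{equation*}
CH_i(S) = \widetilde H_i(S^n \vee S^{n+1}; \bbZ_2) \cong \begin{cases} \bbZ_2, & i = n,\, n+1,\\[2pt] 0, & \text{otherwise,}\end{cases}
\end{equation*}
as claimed.

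The main obstacle is the middle step: justifying that near the periodic orbit the flow genuinely carries the product local model \emph{up to the data that matters for the Conley index}, i.e. controlling the monodromy of the Poincaré return map around $S$. This is precisely where the orientation hypothesis is used — it forces the unstable bundle over $S^1$ to be trivial rather than the twisted one, so the quotient is $S^n \vee S^{n+1}$ rather than the Thom space of the Möbius-type bundle (a distinction invisible to mod-$2$ homology by the Thom isomorphism, but essential for the stated homotopy type and for the integral index). A clean way to sidestep constructing the index pair by hand is to invoke the general fact that the Conley index of a normally hyperbolic invariant submanifold is the Thom space of its unstable normal bundle; the corollary is then just the final homology computation with $S = S^1$ and a trivial rank-$n$ bundle.
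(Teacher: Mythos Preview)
The paper does not supply its own proof of this corollary; it is quoted from \cite{Mischaikow02} and closed with a \qed marker. There is therefore nothing in the paper to compare your argument against.

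On its own merits your proof is correct and is the standard one: use normal hyperbolicity to produce a tubular index pair $(N,L) \cong (S^1 \times D^n \times D^m,\, S^1 \times \partial D^n \times D^m)$, collapse the contractible stable factor, and recognize $N/L$ as the Thom space of the unstable normal bundle over $S^1$. Orientability forces that rank-$n$ bundle to be trivial (its classifying map lands in the connected group $GL_n^+(\bbR)$), so the Thom space is $\Sigma^n(S^1_+) \simeq S^n \vee S^{n+1}$, and the $\bbZ_2$ homology follows. Your own parenthetical remark is worth promoting: since the Thom isomorphism holds over $\bbZ_2$ regardless of orientability, the orientation hypothesis is not actually needed for the $\bbZ_2$ conclusion stated here --- it matters only for the integral index or for pinning down the homotopy type as a wedge of spheres rather than the Thom space of the twisted bundle.
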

\begin{corollary}
If $\mathcal{O}$ is an asymptotically stable periodic orbit then 
\begin{equation*}
    CH_i (\mathcal{O}) \cong \Bigg\{
  \begin{tabular}{cc}
$\bbZ_2$, & \text{if $i = 0, 1 $}, \\
  $0$, &  \text{otherwise. \qed}
  \end{tabular} 
\end{equation*}
\end{corollary}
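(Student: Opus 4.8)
The plan is to mirror the argument behind Theorem~\ref{Conley Index Condition for Equilibria}, replacing the equilibrium point by the circle. Because $\mathcal{O}$ is asymptotically stable, it is an attractor, and hence it admits a compact isolating neighborhood $N$ that is positively invariant under the flow $\varphi$. For such an $N$ no trajectory that starts in $N$ ever leaves $N$, so the exit-set condition (3) in the definition of an index pair is satisfied vacuously by $L = \emptyset$, while conditions (1) and (2) hold trivially. Thus $(N,\emptyset)$ is an index pair for $\mathcal{O}$, and
\[
CH_i(\mathcal{O}) \;=\; H_i(N,\emptyset) \;=\; H_i(N).
\]

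Next I would use the standard fact that an asymptotically stable compact invariant set possesses a neighborhood basis consisting of compact positively invariant sets, each of which deformation retracts onto the set itself; one such retraction is furnished by following the flow, $x \mapsto \lim_{t\to\infty}\varphi(x,t)$, suitably interpolated near the boundary of $N$, or alternatively by a Lyapunov function for the attractor. Choosing $N$ from this basis yields a homotopy equivalence $N \simeq \mathcal{O}$, so $H_i(N) \cong H_i(\mathcal{O})$.

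Finally, a periodic orbit is a compact one-dimensional submanifold diffeomorphic to $S^1$, hence $H_i(\mathcal{O}) \cong H_i(S^1)$, which equals $\bbZ_2$ for $i = 0,1$ and $0$ otherwise (here homology is taken with $\bbZ_2$ coefficients, as elsewhere in this section). Chaining the isomorphisms of the three steps gives the stated computation of $CH_i(\mathcal{O})$.

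I expect the delicate point to be the second step: exhibiting an isolating neighborhood $N$ that genuinely deformation retracts onto $\mathcal{O}$ rather than merely being forward invariant. In the plane this is essentially automatic — a positively invariant isolating neighborhood of an attracting periodic orbit is an annulus, which is homotopy equivalent to $S^1$ — but in general one must appeal to the attractor-neighborhood theorem. It is worth noting that we cannot shortcut the proof by quoting the earlier corollary on hyperbolic invariant sets diffeomorphic to a circle, since an asymptotically stable periodic orbit need not be hyperbolic.
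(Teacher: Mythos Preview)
The paper does not actually prove this corollary: the \qed is embedded in the statement itself, and no argument follows. It is recorded as a standard fact from Conley index theory, presumably read off as the $n=0$ instance of the immediately preceding corollary (Corollary~3.17 of \cite{Mischaikow02}) on hyperbolic circles.

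Your proof is correct and supplies what the paper omits. The three steps --- $(N,\emptyset)$ is an index pair for an attractor, a positively invariant isolating neighborhood of an asymptotically stable set deformation retracts onto it, and $H_*(S^1;\bbZ_2)$ is $\bbZ_2$ in degrees $0,1$ --- are all sound, and you rightly flag the retraction step as the only one needing a genuine dynamical input (a Lyapunov function or the attractor-neighborhood theorem). Your closing remark is also well taken: since asymptotic stability does not imply hyperbolicity, the result is not literally a special case of the previous corollary, so an independent argument of the kind you give is the honest route. In the planar setting relevant to the paper your annulus observation already suffices.
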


If $S_1$ and $S_2$ are isolated invariant sets, then we define the set of connecting orbits from $S_2$ to $S_1$ as
\begin{equation} \label{connectingorbits}
C(S_2, S_1; S) := \{ x \in  S \mid \omega(x)  \subset  S_1 \; \text{and} \; \alpha (x)  \subset  S_2 \},
\end{equation}
where $\omega(x)$ and $\alpha (x)$ denote the omega and alpha limit sets of $x$, respectively. 

A \emph{partial order} on a set $\mathcal{P}$ is a relation $<$ on $\mathcal{P}$ that satisfies:
\begin{enumerate}
     \item  $\pi < \pi$ never  holds for $\pi \in \mathcal{P}$;
     \item if $\pi <  \pi^{'}$  and  $\pi^{'} < \pi^{''}$ then  $\pi  < \pi^{''}$.
\end{enumerate}
 A subset $I \subset \mathcal{P}$ is called an \emph{interval} if  $\pi < \pi^{''} < \pi^{'}$ with $\pi$, $\pi^{'} \in I$ implies $\pi^{''} \in I$. The set of intervals in $<$ is denoted $I(<)$.
 
An interval  $I$ $\in I(<)$ is called an \emph{attracting interval} if $ \pi \in  I$ and $\pi^{'} < \pi $ imply $\pi^{'}\in I $.  The set of attracting intervals in $<$ is denoted $A(<)$.
  
Points $\pi$, $\pi^{'} \in \mathcal{P}$ are called \emph{adjacent} if $\{\pi,\pi^{'} \} \in I(<)$, i.e., $\{\pi,\pi^{'} \}$ or $\{\pi^{'}, \pi \}$ is an interval in $\mathcal{P}$.

\begin{definition} \label{Morse set decomposition}
Let $S$ be a compact invariant set (not necessarily isolated). A ($<$-ordered) \emph{Morse decomposition} of $S$ is a collection $M(S) = \{ M(p) \mid p \in \mathcal{P} \}$ of mutually disjoint compact invariant subsets of $S$ such that if  $x \in S \setminus \underset{p \in \mathcal{P}} {\bigcup} M(p)$, then there exist $q < p \quad (p, q \in \mathcal{P})$ with $\omega(x) \subset M(q)$ and $\alpha(x) \subset M(p)$, i.e., $ x \in C\bigl(M(p),M(q)\bigr)$.
\end{definition}

In general, any ordering on $\mathcal{P}$ satisfying the above property is called admissible (for the flow). The invariant sets, $M(p)$, are called Morse sets.  Moreover, if $S$ is isolated, then each $M(p)$ is also isolated.

For each $I \in I(<)$, let
\begin{equation*}
    M(I)= \Bigl( \bigcup_{\pi \in I} M(\pi) \Bigr)  \cup \Bigl( \bigcup_{\pi, \pi^{'} \in I} C\bigl(M(\pi^{'}),M(\pi)\bigr) \Bigr).
\end{equation*}
We call $M(I)$ a Morse set of the admissible ordering $<$ of $M$. The collection $ \{ M(I)\mid I \in I(<) \}$ of Morse sets of the admissible ordering $<$ is denoted by $MS(<)$. If I $\in A(< )$, then $M(I)$ is an attractor in $S$, and $M(\mathcal{P}\setminus I)$ is its dual repeller. Since $M(I)$ is isolated invariant set, $h\bigl(M(I)\bigr)$ is defined. Let $CH_q(I) = CH_q\bigl(h\bigl(M(I)\bigr);\bbZ_2 \bigr)$ be the singular homology of the pointed space $h\bigl(M(I)\bigr)$. In particular, given $p \in \mathcal{P}$, $CH_q(p) = CH_q\bigl(h\bigl(M(p)\bigr);\bbZ_2 \bigr)$.
If the collection of invariant sets $\{M (p)\mid p \in (\mathcal{P}, <)\}$ is a Morse decomposition of $S$ with admissible ordering $<$, then an associated connection matrix is a linear map
\begin{equation*}
    \Delta: \underset{p\in \mathcal{P} } {\bigoplus} CH_*(M(p)) \rightarrow \underset{p\in \mathcal{P} } {\bigoplus} CH_*(M(p)),
\end{equation*}
where
$\Delta(p,q): CH_*(M(q)) \rightarrow CH_*(M(p))$
is the corresponding $(p,q)$-component of $\Delta$ such that the following conditions are satisfied:
\begin{enumerate}
\item  $\Delta$ is strictly upper triangular,  so that if $\Delta(p,q) \neq 0$ implies $p < q$.
\item $\Delta$ is a boundary map if each
$\Delta(p,q)$ is of degree $-1$ \\
and $\Delta^2 = 0$.
\item For every interval $I$,
$ CH_*(M(I))  \cong \frac{Ker \Delta(I)}{Im \Delta(I)}$.
\end{enumerate}
The most important properties of the connection matrices are stated in the following theorems.
\begin{theorem}[Proposition 5.3, \cite{Franzosa2}]
 If $\{\pi,\pi^{'} \} \in$ $I(<)$ and $\Delta(\pi, \pi^{'}) \neq 0$, then $C\bigl(M(\pi^{'}), M(\pi)\bigr) \neq \emptyset$. \qed
\end{theorem}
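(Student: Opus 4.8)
The plan is to prove the contrapositive: assuming $C\bigl(M(\pi'),M(\pi)\bigr) = \emptyset$, I would show that the component $\Delta(\pi,\pi')$ vanishes. First note that by strict upper triangularity (property (1) of a connection matrix), $\Delta(\pi,\pi')\neq 0$ can only occur when $\pi < \pi'$, so throughout we may assume $\pi < \pi'$; then $C\bigl(M(\pi'),M(\pi)\bigr)$ is exactly the set of orbits running from $M(\pi')$ down to $M(\pi)$.

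The geometric heart of the argument is that, in the absence of such connecting orbits, the Morse set of the interval $I = \{\pi,\pi'\}$ degenerates to a disjoint union. From the formula defining $M(I)$, together with admissibility of the ordering (which forbids orbits from $M(\pi)$ to $M(\pi')$ and forces $C\bigl(M(p),M(p);S\bigr)=M(p)$), one gets
\[
M(I) = M(\pi) \cup M(\pi') \cup C\bigl(M(\pi'),M(\pi)\bigr) = M(\pi) \sqcup M(\pi').
\]
Since $M(\pi)$ and $M(\pi')$ are disjoint compact isolated invariant sets, I would pick isolating neighborhoods $N_\pi$, $N_{\pi'}$ that are themselves disjoint; because the image of a full orbit is connected, any orbit contained in $N_\pi \cup N_{\pi'}$ lies entirely in one of the two pieces, so $N_\pi \cup N_{\pi'}$ isolates $M(I)$. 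Choosing index pairs $(N_\pi, L_\pi)$ and $(N_{\pi'}, L_{\pi'})$ that are disjoint, the pair $(N_\pi \sqcup N_{\pi'},\, L_\pi \sqcup L_{\pi'})$ is an index pair for $M(I)$, its quotient is the wedge $h\bigl(M(\pi)\bigr) \vee h\bigl(M(\pi')\bigr)$, and additivity of relative homology yields
\[
CH_*\bigl(M(I)\bigr) \cong CH_*\bigl(M(\pi)\bigr) \oplus CH_*\bigl(M(\pi')\bigr).
\]

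The proof then closes with a dimension count against property (3) of the connection matrix. By strict upper triangularity the restriction $\Delta(I)$ of $\Delta$ to $CH_*\bigl(M(\pi)\bigr) \oplus CH_*\bigl(M(\pi')\bigr)$ has all diagonal blocks zero and a single off-diagonal block, the degree $-1$ map $\Delta(\pi,\pi')\colon CH_*\bigl(M(\pi')\bigr) \to CH_*\bigl(M(\pi)\bigr)$; hence
\[
\frac{\ker \Delta(I)}{\im \Delta(I)} \cong \frac{CH_*\bigl(M(\pi)\bigr)}{\im \Delta(\pi,\pi')} \oplus \ker \Delta(\pi,\pi').
\]
Property (3) identifies this quotient with $CH_*\bigl(M(I)\bigr)$, which by the previous paragraph is $CH_*\bigl(M(\pi)\bigr) \oplus CH_*\bigl(M(\pi')\bigr)$. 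Comparing $\bbZ_2$-dimensions in each degree forces $\im \Delta(\pi,\pi') = 0$ and $\ker \Delta(\pi,\pi') = CH_*\bigl(M(\pi')\bigr)$, i.e. $\Delta(\pi,\pi') = 0$, the desired contradiction.

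I expect the main obstacle to be the topological step: making precise that vanishing of the connecting-orbit set really forces $M(I)$ to be a disjoint union of \emph{isolated} invariant sets and that the Conley index is additive over such a union (choosing disjoint isolating neighborhoods and disjoint index pairs, and checking the isolation condition for the union). Once that is granted, the concluding comparison is routine linear algebra over $\bbZ_2$, using only the degree $-1$ property and upper triangularity built into the definition of a connection matrix.
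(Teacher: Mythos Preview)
The paper does not prove this theorem at all: it is stated with a \qed and attributed to Franzosa (Proposition~5.3 of \cite{Franzosa2}), so there is no ``paper's own proof'' to compare against. Your proposal therefore goes beyond what the paper does.

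That said, your argument is essentially correct and is a clean way to recover the result from the three defining properties of a connection matrix listed in the paper. The logic is: no connecting orbits $\Rightarrow$ $M(I)=M(\pi)\sqcup M(\pi')$ as isolated invariant sets $\Rightarrow$ $CH_*(M(I))\cong CH_*(M(\pi))\oplus CH_*(M(\pi'))$ by additivity of the index $\Rightarrow$ a rank count against property~(3) forces $\Delta(\pi,\pi')=0$. Two small points to tighten: first, the dimension comparison tacitly uses that each $CH_q$ is a finite-dimensional $\bbZ_2$-vector space, which holds here because the Morse sets are compact isolated invariant sets in $\bbR^n$ and admit compact index pairs; you should say this explicitly. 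Second, since $\Delta$ has degree $-1$, the cleanest way to finish is to sum total $\bbZ_2$-dimensions: writing $r=\operatorname{rank}\Delta(\pi,\pi')$, one has $\dim\ker\Delta(I)-\dim\im\Delta(I)=\bigl(\dim CH_*(M(\pi))+\dim CH_*(M(\pi'))\bigr)-2r$, and equating this with $\dim CH_*(M(I))=\dim CH_*(M(\pi))+\dim CH_*(M(\pi'))$ gives $r=0$.

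For context, Franzosa's original proof proceeds differently: for an adjacent pair $\{\pi,\pi'\}$ the entry $\Delta(\pi,\pi')$ agrees with the flow-defined connecting homomorphism in the long exact sequence of the attractor--repeller pair $\bigl(M(\pi),M(\pi')\bigr)$ in $M(I)$, and that boundary map is zero precisely when the index of $M(I)$ splits, i.e.\ when there are no connecting orbits. Your route avoids invoking that identification and works purely from the axioms given in the paper, at the cost of the finite-dimensionality hypothesis; Franzosa's route is coordinate-free and needs no rank counting, but requires the extra structural fact about adjacent entries.
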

The following theorem  helps us to determine if the boundary map between the Conley invariants of two Morse sets is isomorphism or zero.
\begin{theorem}[Compare  theorem  3.4, \cite{McCord97}]\label{even odd orbits mod 2} Let $M(S) = \{ M(1), M(0) \mid 1 > 0 \}$ be a Morse decomposition consisting of hyperbolic critical points. Let $C(M(1), M(0))$ consist of exactly $k$ heteroclinic orbits which arise as the transverse intersection of the stable and unstable manifolds of $M(0)$ and $M(1)$, respectively. Then $\Delta (1,0) = k  \mod (2) $.\qed
\end{theorem}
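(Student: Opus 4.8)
The idea is to reduce the assertion to a homological degree computation for an attractor--repeller pair and then to localize that computation along the $k$ connecting orbits. Since $M(0)$ and $M(1)$ are hyperbolic critical points, Theorem~\ref{homology of unstable} gives $CH_*(M(i);\bbZ_2)\cong\bbZ_2$ concentrated in degree $n_i:=\dim W^u(M(i))$, so the only possibly nonzero component of $\Delta$ between these two summands has degree $-1$ and can be nonzero only when $n_1=n_0+1$. On the other hand, a transverse connecting orbit lies in the intersection $W^u(M(1))\cap W^s(M(0))$, whose dimension is $n_1-n_0$; since this intersection is nonempty whenever $k\ge 1$ and consists of only finitely many orbits, its dimension is exactly $1$, forcing $n_1=n_0+1$. (The borderline case $n_1=n_0$, such as a saddle--to--saddle connection, is exactly the non--transverse situation excluded here, and is the one studied in the rest of the paper.) When $k=0$ both sides vanish. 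Hence we may assume $n_1=n_0+1$, so the component of $\Delta$ in question is a map $\bbZ_2\to\bbZ_2$, and we must show it is an isomorphism precisely when $k$ is odd.

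Next, regard $M(0)$ as the attractor in $S=M(\mathcal{P})$ with dual repeller $M(1)$ (the interval $\{0\}$ is attracting). Property (3) of the connection matrix, combined with the attractor--repeller long exact sequence
\begin{equation*}
\cdots\to CH_q(M(0))\to CH_q(S)\to CH_q(M(1))\xrightarrow{\ \partial\ }CH_{q-1}(M(0))\to\cdots,
\end{equation*}
identifies the relevant component of $\Delta$ with the connecting homomorphism $\partial$ in degree $q=n_1$; cf.\ \cite{Franzosa2}. Equivalently, the coexact sequence $h(M(0))\to h(S)\to h(M(1))$ exhibits $h(S)$, up to homotopy, as the sphere $S^{n_0}=h(M(0))$ with a single $(n_0+1)$--cell attached along a map $f\colon S^{n_0}\to S^{n_0}$, and $\partial$ is then multiplication by $\deg f$. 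It therefore remains to show $\deg f\equiv k\pmod 2$.

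Finally, I would compute $\deg f$ using an index pair $(N,L)$ for $S$ of adapted form: take small isolating blocks $B_0\supset M(0)$ and $B_1\supset M(1)$ in hyperbolic normal coordinates, joined by a thin flow box along each connecting orbit $\gamma_1,\dots,\gamma_k$; let $N$ be their union and $L$ the part of $\partial N$ through which the flow leaves $N$. Collapsing $B_0$ together with its exit set realizes the map $h(S)\to h(M(1))$, and inspecting this collapse shows that the boundary sphere of the attached cell maps into $h(M(0))=S^{n_0}$ by a map supported on $k$ pairwise disjoint discs, one per $\gamma_j$. On the $j$--th disc, hyperbolicity of the endpoints together with transversality of $W^u(M(1))$ and $W^s(M(0))$ along $\gamma_j$ provides local product coordinates in which this map is a homeomorphism onto its image, hence of local degree $\pm1$. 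Consequently $\deg f=\sum_{j=1}^{k}\varepsilon_j$ with each $\varepsilon_j=\pm1$, which is $\equiv k\pmod 2$; since we work over $\bbZ_2$ the signs drop out and $\Delta=k\bmod 2$.

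The crux is this last step: showing that the adapted index pair decouples the $k$ connecting orbits into disjoint local degree--one contributions. This is exactly the local--to--global computation underlying the Morse boundary operator, and transversality is indispensable for it --- without transversality a single connecting orbit may enter $f$ with multiplicity greater than one, and in $\bbZ_2$ an even multiplicity would be invisible, so the count would fail. A shortcut, if one prefers, is to invoke the known identification of the connection matrix of a Morse--Smale (gradient--like) flow, restricted to an adjacent pair of hyperbolic critical points, with the differential of the Morse--Smale--Witten complex, whose $\bbZ_2$ matrix entry is by definition the number of connecting orbits modulo $2$.
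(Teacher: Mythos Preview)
The paper does not supply its own proof of this statement: the \qed\ immediately following the theorem signals that it is quoted from the literature (the header points to Theorem~3.4 of McCord--Reineck \cite{McCord97}), and no argument appears anywhere in the text. So there is nothing in the paper to compare your proposal against.

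As an independent sketch your outline is sound and follows the standard route. The reduction to the attractor--repeller connecting homomorphism, the dimension count forcing $n_1=n_0+1$ under transversality, and the localisation of the attaching map into $k$ disjoint local-degree-$\pm1$ pieces are exactly the ingredients of the McCord--Reineck argument (and, as you note, of the identification of the connection matrix with the Morse--Smale--Witten differential). The one place that would need genuine work in a full proof is the construction of the adapted index pair with flow tubes and the verification that the resulting collapse map really has the claimed local form; you flag this yourself, and it is precisely what the cited reference carries out.
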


As we are interested in studying a parametrized family of differential equations, we need to know how connection matrices at different parameter values are related. This is the purpose of the following theorem.
\begin{theorem}[\cite{Mischaikow89}]
Let $S(\theta)$ be isolated invariant sets related by continuation for all parameter values $\theta$. If $\Delta(\theta)$ and $\Delta(\theta')$ are connection matrices for Morse decompositions of $S(\theta)$ and $S(\theta')$ respectively, then there exists a matrix, $T$, consisting of degree $0$ maps such that 
\begin{equation} \label{Delta^2 = 0}
\Delta(\theta) T +T \Delta(\theta') = 0.
\end{equation}
\end{theorem}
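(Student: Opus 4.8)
Since the paper uses $\bbZ_2$ coefficients, the asserted identity $\Delta(\theta)T+T\Delta(\theta')=0$ is the same as $\Delta(\theta)T=T\Delta(\theta')$; that is, $T$ is to be a degree-$0$ chain map between the connection-matrix complexes $\bigl(\bigoplus_p CH_*(M(p)(\theta)),\Delta(\theta)\bigr)$ and $\bigl(\bigoplus_p CH_*(M(p)(\theta')),\Delta(\theta')\bigr)$ whose diagonal blocks are the continuation isomorphisms $CH_*(M(p)(\theta'))\cong CH_*(M(p)(\theta))$ coming from the hypothesis that the family $S(\cdot)$ continues. The plan is to produce such a $T$ as a (degree-shifted) off-diagonal block of a single connection matrix, namely that of one large isolated invariant set built over the parameter interval.

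First I would reduce to a local statement: since $S(\theta)$ and $S(\theta')$ are related by continuation, a compactness argument along a path of parameters subdivides the interval into finitely many pieces on each of which a single isolating neighbourhood $N$ (and a single adapted index filtration) isolates $S(s)$ for all parameters $s$ in that piece; since a composite of chain maps is a chain map, it then suffices to treat the case where $[\theta,\theta']$ is one such piece. On $[\theta,\theta']\times N$ I would run the augmented flow whose $X$-component is the given family and whose parameter component is $\dot\lambda=-(\lambda-\theta)(\theta'-\lambda)$ (extended slightly past the endpoints), so that in $\widehat S:=\Inv([\theta,\theta']\times N)$ the slice $\{\theta'\}\times S(\theta')$ is a repeller and $\{\theta\}\times S(\theta)$ is an attractor. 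A Morse decomposition of $\widehat S$ is given by the two ``copies'' $\{M(p)(\theta)\mid p\in\mathcal{P}\}$ and $\{M(q)(\theta')\mid q\in\mathcal{P}\}$, with an admissible order extending $<$ on each copy, placing every $\theta$-copy Morse set below the $\theta'$-copy Morse sets, and with $M(p)(\theta)<M(p)(\theta')$ so that each pair $\{M(p)(\theta),M(p)(\theta')\}$ is an interval whose Morse set is the continuation arc $\widehat{M(p)}$. Applying Franzosa's connection-matrix existence theorem to this decomposition yields a connection matrix $\widehat\Delta$ for $\widehat S$. Because $\widehat\Delta$ is strictly upper triangular and the $\theta$-copy is an attracting interval, $\widehat\Delta$ is block upper triangular: its restriction to the attracting interval is a connection matrix for $\{\theta\}\times S(\theta)\cong S(\theta)$, the map it induces on the dual repeller is a connection matrix for $\{\theta'\}\times S(\theta')$, and the remaining off-diagonal block is the candidate $T$. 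Reading $\widehat\Delta^2=0$ in the $(\theta,\theta')$-block then gives exactly $\Delta(\theta)T+T\Delta(\theta')=0$.

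Two points need care, and the second is the genuine obstacle. First, the endpoint slices must be honest isolated invariant sets of the augmented flow with the expected indices: along $\{\theta'\}\times S(\theta')$ the $\lambda$-direction is unstable and along $\{\theta\}\times S(\theta)$ it is stable, so $CH_*(\{\theta'\}\times S(\theta'))$ is the suspension of $CH_*(S(\theta'))$ while $CH_*(\{\theta\}\times S(\theta))\cong CH_*(S(\theta))$. It is precisely this one-sided shift, sitting on the \emph{domain} side of $T$, that absorbs the degree $-1$ of a connection-matrix entry and leaves $T$ of degree $0$; verifying that the boundary behaviour of the augmented flow really produces these (and not trivial) indices, and tracking the de-suspension cleanly, is the bookkeeping core of the argument. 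Second, one must match the two diagonal blocks with the \emph{given} matrices $\Delta(\theta),\Delta(\theta')$, not merely with some connection matrices of those decompositions. Here I would use that applying axiom (3) to the two-element continuation intervals $\{M(p)(\theta),M(p)(\theta')\}$ — whose Morse set $\widehat{M(p)}$ is an ``orbit arc'' with trivial Conley index — forces the diagonal entries $T(p,p)$ to be isomorphisms (the continuation isomorphisms), so $T$ is invertible; and that any two connection matrices of a fixed Morse decomposition are, over $\bbZ_2$, related by an upper-triangular change of basis, so a discrepancy between the extracted diagonal blocks and the prescribed $\Delta(\theta),\Delta(\theta')$ can be absorbed by conjugating $T$. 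Composing the transition matrices obtained on the subintervals of the reduction then gives the theorem for the original $\theta,\theta'$.
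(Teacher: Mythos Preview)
The paper does not prove this theorem; it is quoted from \cite{Mischaikow89} as a background result and used without argument. There is therefore no ``paper's proof'' to compare against.

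That said, your sketch is essentially the standard construction in the cited literature (Reineck \cite{Reineck88}, Mischaikow \cite{Mischaikow89}, McCord--Reineck \cite{McCord97}): introduce a slow drift $\dot\lambda$ on the parameter, obtain an attractor--repeller pair at the two endpoint slices, take a connection matrix $\widehat\Delta$ for the doubled Morse decomposition of the product flow, and read off $T$ as the off-diagonal block, with the suspension on the repeller side accounting for the degree shift so that $T$ has degree $0$. The identity $\Delta(\theta)T+T\Delta(\theta')=0$ then drops out of $\widehat\Delta^2=0$. One caution: your final step, absorbing a mismatch between the extracted diagonal blocks and \emph{prescribed} $\Delta(\theta),\Delta(\theta')$ by an upper-triangular conjugation, leans on the claim that any two connection matrices of a fixed Morse decomposition are so related over $\bbZ_2$; this is not true in full generality, and the cited sources handle this more carefully via the transition-system formalism (in particular, the diagonal blocks produced by $\widehat\Delta$ are themselves connection matrices, and the theorem is usually stated for those, or one works with the full set of connection matrices simultaneously). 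For the way the result is actually used in this paper that subtlety is harmless, but it is the one place your outline overreaches.
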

The matrix $T$ is called a transition matrix and its entries give information about the existence of connecting orbits for the parameter values between $\theta$ and $\theta'$.

Because we are interested in heteroclinic and homoclinic saddle connections, the basic results of how we can detect them may be summarized as follows.
\begin{itemize}
    \item Heteroclinic Connections$\colon$ If the transition matrix entry 
$T(M^0_{\pi},M^1_{\rho})$ is a non-zero isomorphism,
then there exists $\theta^* \in ( \theta,\theta')$,
such that $C(M^{\theta^*}_{\rho}, M^{\theta^*}_{\pi}) \neq \emptyset$, where $M^0_{\pi}$ is the Morse set at $\theta$ with a partial order $\pi \in \mathcal{P}$ and $M^1_{\pi}$ is the Morse set at $\theta'$ with a partial order $\rho \in \mathcal{P}'$.  That is, there is a connection from  $M_\rho^{\theta^*}$ to $M_\pi^{\theta^*}$, see corollary 5.8, \cite{Jawarneh18} and theorem 3.13,\cite{Reineck88}.
\item Generalized Homoclinic Connections$\colon$ If there exists homology dimension $q$ such that $T_{q}(M_\pi^0,M_\pi^1)$, is not an isomorphism, then $M_\pi^{\theta^*}$ is not a Morse set for  some $\theta^* \in ( \theta,\theta') $ and, when this occurs, 
 then there exists a generalized homoclinic orbit to $M_\pi^{\theta^*}$, see corollary 4.4,\cite{Mischaikow88}.
\end{itemize}

\section{Heteroclinic and homoclinic bifurcations and the Conley index}\label{Bifurcations and Conley index}
\hspace{\parindent}
Many  bifurcations of the system \eqref{mvpsystem extended 1st order}  will be verified in this section by Conley index methods. We use Maple to reveal these bifurcations by fixing the parameters $d$ and $e$ for different cases depending on the signs of $d$ and $e$, then playing with the value of the parameter $\theta$ over particular intervals in $\bbR$. When we see a kind of bifurcation over a specific interval, we modify this interval to be smaller and smaller, and play with the parameter $\theta$ again over the new smaller interval to get $\theta_b$, the value of $\theta$ before bifurcation, and $\theta_a$, the value of $\theta$ after bifurcation. Then we define $\theta_*$ which is the value of $\theta$ at bifurcation.
To construct connection matrices, we need the basic information stated in table \ref{table basic information}.
\begin{table}
    \centering
  \begin{tabular}{ |c|c|c| } 
 \hline
  If $p$ is a sink  & $CH_q(p) = \bbZ_2$ if $q = 0$.  \\
 \hline
  If $p$ is a saddle  in the  plane & $CH_q(p) = \bbZ_2$ if $q = 1$  \\ 
 \hline
 If $p$ is a source in the plane & $CH_q(p) = \bbZ_2$ if $q = 2$ \\ 
 \hline
  If $\mathcal{O}$ is a stable limit cycle & $CH_q(\mathcal{O}) = \bbZ_2$ if $q = 0$ or $1$\\
  \hline
\end{tabular}
    \caption{Conley index of Morse sets related to the system \ref{mvpsystem extended 1st order}}
    \label{table basic information}
\end{table}

We notice that when the parameters $d$ and $e$ are the same sign, there is a prospective homoclinic saddle connection at the equilibrium point $E_1 = (- d,0)$, which results in the vanishing of the limit cycle around the origin. For example, consider system \eqref{mvpsystem extended 1st order} with $d = 0.5$ and $e = 2$.
\begin{example} \label{example of homo. d and e same signs}
\begin{equation} \label{Homo. d = 0.5, e = 2}
   \begin{aligned} 
    \frac{dx}{dt}& = y,   
    \\
    \frac{dy}{dt}& = - (x^2- \theta) y - x(x+0.5)(x+2).
\end{aligned} 
\end{equation}
\end{example}
Using Maple, we find that when $\theta$ moves from $ \theta_b = 0.02$ to $\theta_a = 0.04$, we can expect a homoclinic saddle connection at the equilibrium point $E_1 = (- 0.5,0)$ that goes around the origin and back to $E_1$, as  illustrated in figure \ref{figure.Homo. example1}. The homoclinic bifurcation happens at $\theta = \theta_* \in (0.02 , 0.04)$. Figure \ref{figure.Homo. example1} represents the vector fields of the solutions of example \ref{example of homo. d and e same signs} before and after the homoclinic saddle connection.
\begin{figure}[ht]
  \centering
  \begin{minipage}[c]{0.5\linewidth}
    \centering
\includegraphics[width=2.3in]{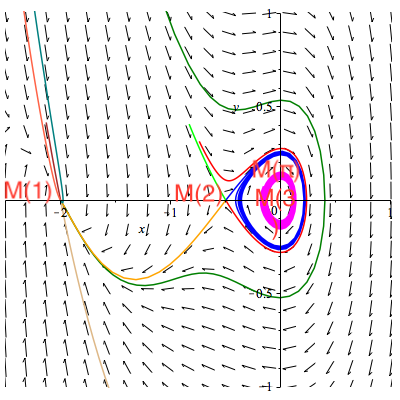}
  \end{minipage}%
\begin{minipage}[c]{0.5\linewidth}
    \centering
\includegraphics[width=2.3in]{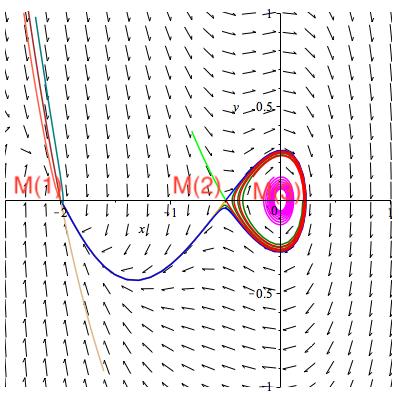}
  \end{minipage}%
  \caption{Before and after a homoclinic bifurcation of $M(2)$: A cycle disappears}
  \label{figure.Homo. example1}
\end{figure}
 In this example, the equilibrium point $E_0 = (0,0)$ is a source and there is a stable limit cycle  by theorem \ref{stability E_0 Heart}, the equilibrium point $E_1 = (- 0.5,0)$ is a saddle by theorem \ref{stability E_1 Heart}, and the equilibrium point $E_2 = (- 2,0)$ is stable by theorem \ref{stability E_2 Heart}. 
 
To prove the existence of certain bifurcations we use the Conley index with $\bbZ_2$ coefficients, connection matrices, and transition  matrices that are  mentioned in the previous section. Also, all $\bbZ_2$ vector spaces are either 0 or of dimension 1. Between two $\bbZ_2$ vector spaces of dimension 1 there are only two linear maps--an isomorphism and zero. 

Recall that the connection matrix  $\Delta$ consists of degree $-1$ maps, and the transition matrix $T$ consists of degree $0$ maps.

The left graph in figure \ref{figure.Homo. example1} represents the flow before the homoclinic bifurcation.  
Let $S^0$ be the compact invariant set in the graph consisting of the equilibria $(-2,0)$, $(-0.5,0)$, and $(0,0)$, a stable limit cycle around the origin, the connection from $(-0.5,0)$ to the limit cycle, the connection from $(-0.5,0)$ to $(-2,0)$, and the disc bounded by the cycle.  Then, by definition \ref{Morse set decomposition}, the  ($<_0$-ordered) Morse decomposition of $S^0$ is a collection $M(S^0) = \{ M(p) \mid p \in \mathcal{P}_0 = \{ 1,2,3, \pi\} \}$ such that $M(1) = \{(-2,0)\}$ which is stable,  $M(2) = \{(-0.5,0)\}$ which is a saddle,  $M(\pi)$ which is the stable limit cycle, and   $M(3) = \{(0,0)\}$ which is a source, with flow ordering  $M(1) <_0 M(2)$, $M(\pi) <_0 M(2)$, $M(\pi) <_0 M(3)$. 

Let us represent $M(1)$ by $1$,  $M(2)$ by $2$, $M(\pi)$ by $\pi$,  $M(3)$ by $3$, and $CH_q(-)$  by $H_q(-)$.
Let $\Delta_0^0$ be the connection matrix before  homoclinic  bifurcation with admissible ordering $<_0$.  Let $H_q^0(k)$ be the Conley index of the Morse set $M(k)$ before the bifurcation such that $k \in  \{1,2,3,\pi\}$ represents the Morse set $M(k)$ and the index $q \in \{0,1,2\}$ is the value where  $H_q(k)$  equals $\bbZ_2$. Referring to table \ref{table basic information} we have $H_q(1)  = \bbZ_2$ $\text{if $q = 0$}$,  $H_q(2) = \bbZ_2$ \text{if $q = 1$},  $H_q (\pi) = \bbZ_2$  \text{if $q = 0,1$}, and $H_q (3) =\bbZ_2$  \text{if $q = 2$}. Denoting a connection orbit between corresponding Morse sets by $\cong$, the  connection matrix $\Delta_0^0$ can be represented as follows:

\begin{equation*}
\Delta_0^0 =
\begin{array}{cc}
 &   \begin{array}{ccccc} H^0_0(1) & H^0_0(\pi)   &  H^0_1(\pi) & H^0_1(2) & H^0_2(3)\end{array}
\\
\begin{array}{c}  H^0_0(1) \\ H^0_0(\pi)  \\ H^0_1(\pi)\\ H^0_1(2)  \\ H^0_2(3)  \end{array} &  
\left(\begin{array}
{c@{\extracolsep{2.9em}}c@{\extracolsep{3.4em}}c@{\extracolsep{3.0em}}c@{\extracolsep{2.7em}}c}
0     &  0   & 0 &  \cong & 0
\\
   0     &  0   & 0 &  \cong & 0
\\
 0     &  0   & 0 &  0 & \cong
 \\
  0     &  0   & 0 &  0 & 0
  \\
  0     &  0   & 0 &  0 & 0
\end{array}  \right)
\end{array}.
\end{equation*}

The right graph in figure \ref{figure.Homo. example1} represents the flow after the homoclinic bifurcation. 
Let $S^1$ be the compact invariant set in the graph consisting of the equilibria $(-2,0)$, $(-0.5,0)$, and $(0,0)$, the two connections from $(-0.5,0)$ to $(-2,0)$, and the connection from the origin to $(-0.5,0)$.  A ($<_1$-ordered) Morse decomposition of $S^1$ is a collection $M(S^1) = \{ M(p) \mid p \in \mathcal{P}_1 = \{ 1,2,3\} \}$ such that, $M(1) = \{(-2,0)\}$ which is stable,  $M(2) = \{(-0.5,0)\}$ which is a saddle, and   $M(3) = \{(0,0)\}$ which is a source, with flow ordering  $M(1) <_1 M(2)$, $M(2) <_1 M(3)$. As there are exactly two connection orbits from $M(2)$ to $M(1)$, $\Delta_1^1(2,1) = 2 \mod(2) = 0$ by corollary \ref{even odd orbits mod 2}, where $\Delta_1^1$ is a connection matrix after homoclinic  bifurcation with admissible ordering $<_1$. Let $H_q^1(k)$ be the Conley index of the Morse set $M(k)$ after bifurcation such that $k \in \{1,2,3\}$ represents the Morse set $M(k)$ and  the index $q \in \{0,1,2\}$ is the value where  $H_q(k)$  equals $\bbZ_2$.  After the bifurcation, the stable limit cycle has disappeared, so we no longer have $M(\pi)$.  The Conley indices of the Morse sets after  bifurcation are still the same as before the bifurcation because there is no change in their stabilities. Then
\begin{equation*}
\Delta_1^1=
\begin{array}{cc}
 &   \begin{array}{ccc} H^1_0(1) &   H^1_1(2)   &   H^1_2(3) \end{array}
\\
\begin{array}{c} H^1_0(1) \\ H^1_1(2)  \\ H^1_2(3) \end{array} &  
\left( 
\begin{array}{c@{\extracolsep{2.9em}}c@{\extracolsep{3.4em}}c}
0     &  0   &  0
\\
   0     &      0    &  \cong 
\\
 0      & 0        &   0
\end{array}  \right)
\end{array}.
\end{equation*}
Let $T_{0,1}$ be the transition matrix for the flow in figure \ref{figure.Homo. example1}. Then
\begin{equation*}
T_{0,1} =
\begin{array}{cc}
 &   \begin{array}{ccccc} H^1_0(1) & H^1_1(2)   &  H^1_2(3) \end{array}
\\
\begin{array}{c} H^0_0(1) \\ H^0_0(\pi)  \\ H^0_1(\pi)\\H^0_1(2)  \\ H^0_2(3)\end{array} &  
\left( \begin{array}{c@{\extracolsep{2.0em}}c@{\extracolsep{2.2em}}c}
 \cong    &  0   & 0 
\\
   *     &  0   & 0 
\\
 0     &  *   & 0 
 \\
  0     &  T(2,2)  & 0
  \\
  0     &  0   &  \cong
\end{array}  \right)
\end{array},
\end{equation*}
where $*$ denotes an undetermined entry. By \eqref{Delta^2 = 0} we have $\Delta_0^0 T_{0,1} +T_{0,1}\Delta_1^1 = 0$, which gives us 
\begin{equation*}
 0 \cdot *\;  + \cong \cdot T(2,2)\; + \cong \cdot\; 0  = 0 \Rightarrow \;\cong \cdot \; T(2,2)  = 0 \Rightarrow T(2,2) = 0.   
\end{equation*}
Then there exists a $\theta_* \in (0.02, 0.04)$ such that there is a homoclinic orbit to $M(2)$ when $\theta = \theta_*$, by corollary 4.4, \cite{Mischaikow88}.

When the signs of the parameters $d$ and $e$ are different,  there are  two prospective heteroclinic saddle connections between the equilibrium points $E_1 = (-d,0)$ and $E_2 = (-e,0)$, and a homoclinic saddle connection at the equilibrium point $E_1 = (- d,0)$ that goes around the origin. For example, consider system \eqref{mvpsystem extended 1st order} with $d = -1 $ and $e = 2$, then we get the following 
\begin{example} \label{example of 2hetero,1homo. d and e different signs}
\begin{equation} \label{Hetro. d = -1, e = 2}
   \begin{aligned} 
    \frac{dx}{dt}& = y,   
    \\
    \frac{dy}{dt}& = - (x^2- \theta) y + \frac{1}{2}x(x-1)(x+2),
\end{aligned} 
\end{equation}
\end{example}
 The equilibrium point $E_1 = (1,0)$ is a saddle by theorem \ref{stability E_1 Heart}, and the equilibrium point $E_2 = (- 2,0)$ is a saddle by theorem \ref{stability E_2 Heart}. Using Maple, we find that as $\theta$ moves from $ \theta_{b_1} = -0.2$ to $\theta_{a_1} = -0.05$, we can expect a first heteroclinic saddle connection between the saddles $E_1 = (1,0)$ and $E_2 =(-2,0)$ with the upper half plane as illustrated in figure \ref{figure.hetero1. example2}. The heteroclinic bifurcation happens at $\theta = \theta_{*1} \in (-0.2 , -0.05)$. Notice that, the equilibrium point $E_0 = (0,0)$ is stable by theorem \ref{stability E_0 Heart} since $\theta < 0$. 
\begin{figure}[ht]
  \centering
  \begin{minipage}[c]{0.5\linewidth}
    \centering
\includegraphics[width=2.3in]{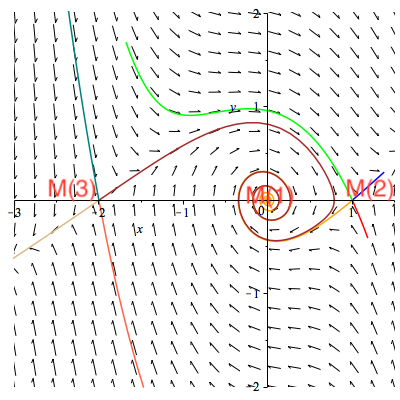}
  \end{minipage}%
\begin{minipage}[c]{0.5\linewidth}
    \centering
\includegraphics[width=2.3in]{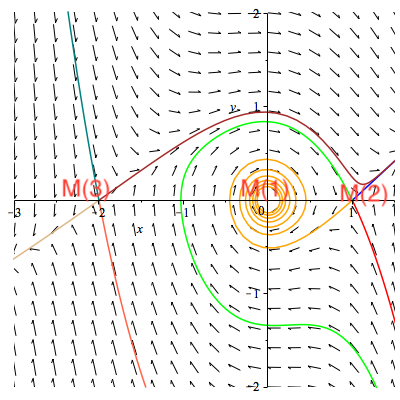}
  \end{minipage}%
  \caption{Before and after the first heteroclinic}
  \label{figure.hetero1. example2}
\end{figure}

The left graph in figure \ref{figure.hetero1. example2} represents the flow before the first heteroclinic bifurcation.
Let $S^2$ be the compact invariant set in the graph consisting of the equilibria $(1,0)$, $(-2,0)$, and $(0,0)$, the connection from $(1,0)$ to the origin, and the connection from $(-2,0)$ to the origin. 
A  ($<_2$-ordered) Morse decomposition of $S^2$ is a collection $M(S^2) = \{ M(p) \mid p \in \mathcal{P}_2 = \{ 1,2,3\} \}$ such that $M(1) = \{(0,0)\}$ which is stable,  $M(2) = \{(1,0)\}$ which is a saddle, and   $M(3) = \{(-2,0)\}$ which is a saddle, with flow ordering  $M(1) <_2 M(2)$ and $M(1) <_2 M(3)$. 
Let $\Delta_2^0$ be the connection matrix before the first heteroclinic  bifurcation with admissible ordering $<_2$.  Then
\begin{equation*}
\Delta_2^0 =
\begin{array}{cc}
 &   \begin{array}{ccc} H^0_0(1) & H^0_1(2)   &  H^0_1(3) \end{array}
\\
\begin{array}{c}  H^0_0(1) \\ H^0_1(2)  \\ H^0_1(3)  \end{array} &  
\left( \begin{array}{c@{\extracolsep{3.4em}}c@{\extracolsep{3.0em}}c}
0     &  \cong  & \cong 
\\
   0     &  0   & 0 
\\
 0     &  0   & 0 
\end{array}  \right).
\end{array}
\end{equation*}

The right graph in  figure \ref{figure.hetero1. example2} represents the flow after the first heteroclinic bifurcation.  
Let $S^3$ be the compact invariant set in the graph consisting of the equilibria $(1,0)$, $(-2,0)$, and the origin, and the connection from $(1,0)$ to the origin.  A ($<_3$-ordered) Morse decomposition of $S^3$ is a collection $M(S^3) = \{ M(p) \mid p \in \mathcal{P}_3 = \{ 1,2,3\} \}$ such that, $M(1) = \{(0,0)\}$ which is stable,  $M(2) = \{(1,0)\}$ which is a saddle, and   $M(3) = \{(-2,0)\}$ which is a saddle, with flow ordering  $M(1) <_3 M(2)$.   

Let $\Delta_3^1$ be the connection matrix after  first heteroclinic  bifurcation with admissible ordering $<_3$. The Conley index of the Morse sets after the bifurcation are still the same as before the bifurcation because there is no change on their stabilities. Then
\begin{equation*}
\Delta_3^1=
\begin{array}{cc}
 &   \begin{array}{ccc} H^1_0(1) &   H^1_1(2)   &   H^1_1(3) \end{array}
\\
\begin{array}{c} H^1_0(1) \\ H^1_1(2)  \\ H^1_1(3) \end{array} &  
\left( \begin{array}{c@{\extracolsep{2.9em}}c@{\extracolsep{3.4em}}c}
0     &  \cong   &  0
\\
   0     &      0    &  0
\\
 0      & 0        &   0
\end{array}  \right)
\end{array}.
\end{equation*}

Let $T_{2,3}$ be the transition matrix for the flow in figure \ref{figure.hetero1. example2}. Then
\begin{equation*}
T_{2,3} =
\begin{array}{cc}
 &   \begin{array}{ccc} H^1_0(1) & H^1_1(2)   &  H^1_1(3) \end{array}
\\
\begin{array}{c} H^0_0(1) \\ H^0_1(2)  \\ H^0_1(3)\end{array} &  
\left( \begin{array}{c@{\extracolsep{2.9em}}c@{\extracolsep{1.9em}}c}
 \cong    &  0   & 0 
\\
   0    &  \cong   & T(2,3)
\\
 0     &  0   &   \cong
\end{array}  \right)
\end{array}.
\end{equation*}
By \eqref{Delta^2 = 0} we have $\Delta^0_2 T_{2,3} +T_{2,3} \Delta^1_3 = 0$ which gives us 
\begin{equation*}
  \cong \cdot\; T(2,3)\; + \cong \cdot \cong \; = 0 \Rightarrow T(2,3) = \;\cong 
\end{equation*}
Then, by theorem 3.13, \cite{Reineck88}  we have  $\partial \bigl(M(3), M(2)\bigr) \neq 0$, which implies to $C\bigl(M(3), M(2)\bigr) \neq \emptyset$. Therefore, the saddles are connected for some parameter value $\theta_{*1} \in (-0.2, -0.05)$.

The second bifurcation in this example is a homoclinic saddle connection at the equilibrium point $E_1 = (1,0)$ when $\theta$ moves from $ \theta_{b_2} = 0.1$ to $\theta_{a_2} = 0.2$,  as  is illustrated in figure \ref{figure.homo1. example2}. The homoclinic bifurcation happens at $\theta = \theta_{*2} \in (0.1 , 0.2)$. In this case, the equilibrium point $E_0 = (0,0)$ is unstable by theorem \ref{stability E_0 Heart} since $\theta > 0$. The  homoclinic saddle connection happened after breaking the stable limit cycle which we proved the existence of in theorem \ref{stability E_0 Heart}. Also, there is a bifurcation  as $\theta$ passes from negative to positive where the equilibrium point $E_0$ is changed from unstable to stable.  
%
\begin{figure}[ht]
  \centering
  \begin{minipage}[c]{0.5\linewidth}
    \centering
\includegraphics[width=2.3in]{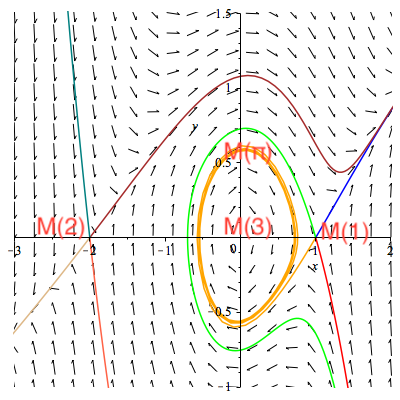}
  \end{minipage}%
\begin{minipage}[c]{0.5\linewidth}
    \centering
\includegraphics[width=2.3in]{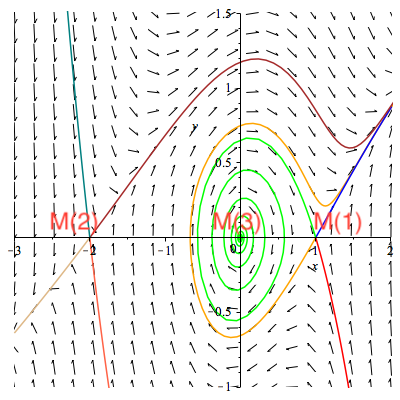}
  \end{minipage}%
  \caption{Before and after a homoclinic bifurcation of $M(1)$: A cycle disappears}
  \label{figure.homo1. example2}
\end{figure}

The left graph in the figure \ref{figure.homo1. example2} represents the flow before the first heteroclinic bifurcation.
Let $S^4$ be the compact invariant set in the graph consisting of the equilibria $(-2,0)$, $(1,0)$, and $(0,0)$, a stable limit cycle around the origin, the connection from $(1,0)$ to the limit cycle, and the disc bounded by the cycle. A ($<_4$-ordered) Morse decomposition of $S^4$ is a collection $M(S^4) = \{ M(p) \mid p \in \mathcal{P}_4 = \{ 1,2,3,\pi \} \}$ such that $M(1) = \{(1,0)\}$ is a saddle, $M(2) = \{(-2,0)\}$  is a saddle,  $M(3) = \{(0,0)\}$  is a source, and $M(\pi)$ is stable limit cycle, with flow ordering  $M(\pi) <_4 M(1)$ and $M(\pi) <_4 M(3)$.

Let $\Delta_4^0$ be the connection matrix before homoclinic  bifurcation with admissible ordering $<_4$, then

\begin{equation*}
\Delta_4^0 =
\begin{array}{cc}
 &   \begin{array}{ccccc}   H^0_0(\pi)   &  H^0_1(\pi) & H^0_1(1) & H^0_1(2) & H^0_2(3)\end{array}
\\
\begin{array}{c}   H^0_0(\pi)  \\ H^0_1(\pi)\\  H^0_1(1)\\H^0_1(2)  \\ H^0_2(3)  \end{array} &  
\left( \begin{array}{c@{\extracolsep{2.9em}}c@{\extracolsep{3.4em}}c@{\extracolsep{3.0em}}c@{\extracolsep{2.7em}}c}
0     &  0   & \cong &  0 & 0
\\
   0     &  0   & 0 & 0 & \cong
\\
 0     &  0   & 0 &  0 & 0
 \\
  0     &  0   & 0 &  0 & 0
  \\
  0     &  0   & 0 &  0 & 0
\end{array}  \right)
\end{array}.
\end{equation*}
The right graph in  figure \ref{figure.homo1. example2} represents the flow after the first heteroclinic bifurcation. 
Let $S^5$ be the compact invariant set in the graph consisting of the equilibria $(-2,0)$, $(1,0)$, and $(0,0)$, as well as the connections from the origin to $(1,0)$. 
A ($<_5$-ordered) Morse decomposition of $S^5$ is a collection $M(S^5) = \{ M(p) \mid p \in \mathcal{P}_5 = \{ 1,2,3 \} \}$ such that $M(1) = \{(1,0)\}$ which is a saddle, $M(2) = \{(-2,0)\}$ which is a saddle, and  $M(3) = \{(0,0)\}$ which is a source, with flow ordering  $M(1) <_5 M(3)$.

Let $\Delta_5^1$ be the connection matrix after homoclinic  bifurcation with admissible ordering $<_5$, then
 \begin{equation*}
\Delta_5^1=
\begin{array}{cc}
 &   \begin{array}{ccc} H^1_1(1) &   H^1_1(2)   &   H^1_2(3) \end{array}
\\
\begin{array}{c} H^1_1(1) \\ H^1_1(2)  \\ H^1_2(3) \end{array} &  
\left( \begin{array}{c@{\extracolsep{2.9em}}c@{\extracolsep{3.4em}}c}
0     &  0   &  \cong
\\
   0     &      0    &  0
\\
 0      & 0        &   0
\end{array}  \right)
\end{array}.
\end{equation*}
Let $T_{4,5}$ be the transition matrix for the flow in figure \ref{figure.homo1. example2}. Then
\begin{equation*}
T_{4,5} =
\begin{array}{cc}
 &   \begin{array}{ccccc} H^1_1(1) & H^1_1(2)   &  H^1_2(3) \end{array}
\\
\begin{array}{c} H^0_0(\pi)  \\ H^0_1(\pi)\\ H^0_1(1) \\ H^0_1(2)  \\ H^0_2(3)\end{array} &  
\left( \begin{array}{c@{\extracolsep{2.0em}}c@{\extracolsep{3.0em}}c}
0   &  0   & 0 
\\
   *     &  *   & 0 
\\
  T(1,1)    &  *   & 0 
 \\
  *     &   \cong   & 0
  \\
  0     &  0   &  \cong
\end{array}  \right)
\end{array}.
\end{equation*}
By \eqref{Delta^2 = 0} we have $\Delta^0_4 T_{4,5} +T_{4,5} \Delta^1_5 = 0$ which give us
\begin{equation*}
    \cong \cdot\;  T(1,1)  = 0 \Rightarrow T(1,1) = 0.
\end{equation*}
 
 Then there exists $\theta_{*2} \in (0.1 , 0.2)$ such that  there exists a homoclinic orbit to $M(1)$ when $\theta = \theta_{*2}$, by corollary 4.4 \cite{Mischaikow88}.

Another bifurcation is the second heteroclinic saddle connection  when $\theta$ moves from $ \theta_{b_3} = 1.1$ to $\theta_{a_3} = 1.2$ between the saddles $E_1 = (1,0)$ and $E_2 =(-2,0)$  with the lower half plane as illustrated in figure \ref{figure.hetero2. example2}. The heteroclinic bifurcation happens at $\theta = \theta_{*3} \in (1.1 , 1.2)$.
The equilibrium point $E_0 = (0,0)$ is unstable by theorem \ref{stability E_0 Heart} as $\theta > 0$.
\begin{figure}[ht]
  \centering
  \begin{minipage}[c]{0.5\linewidth}
    \centering
\includegraphics[width=2.3in]{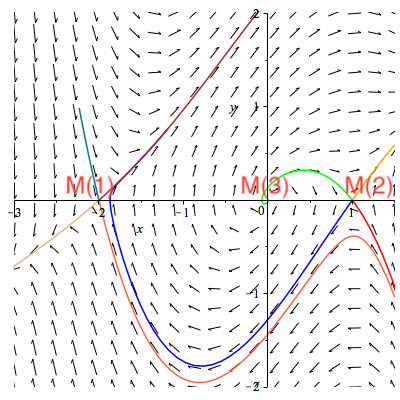}
  \end{minipage}%
\begin{minipage}[c]{0.5\linewidth}z
    \centering
\includegraphics[width=2.3in]{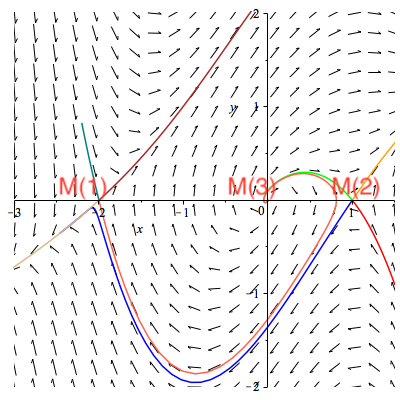}
  \end{minipage}%
  \caption{Before and after the second heteroclinic }
  \label{figure.hetero2. example2}
\end{figure}

The left graph in the figure \ref{figure.hetero2. example2} represents the flow before the first heteroclinic bifurcation.
Let $S^6$ be the compact invariant set in the graph consisting of the equilibria $(1,0)$, $(-2,0)$, and  $(0,0)$, as well as the connection from $(1,0)$ to the origin. A  ($<_6$-ordered) Morse decomposition of $S^6$ is a collection $M(S^6) = \{ M(p) \mid p \in \mathcal{P}_6 = \{ 1,2,3\} \}$ such that $M(1) = \{(-2,0)\}$ which is a saddle,  $M(2) = \{(1,0)\}$ which is a saddle, and   $M(3) = \{(0,0)\}$ which is a source, with flow ordering  $M(2) <_6 M(3)$. 

Let $\Delta_6^0$ be the connection matrix before  the second heteroclinic  bifurcation with admissible ordering $<_6$. Then
\begin{equation*}
\Delta_6^0 =
\begin{array}{cc}
 &   \begin{array}{ccc} H^0_1(1) & H^0_1(2)   &  H^0_2(3) \end{array}
\\
\begin{array}{c}  H^0_1(1) \\ H^0_1(2)  \\ H^0_2(3)  \end{array} &  
\left( \begin{array}{c@{\extracolsep{2.9em}}c@{\extracolsep{3.4em}}c}
0     & 0  & 0 
\\
  0 &  0   &  \cong 
\\
 0     &  0   & 0 
\end{array}  \right)
\end{array}.
\end{equation*}
 
The right graph in figure \ref{figure.hetero2. example2} represents the flow after the first heteroclinic bifurcation.
Let $S^7$ be the compact invariant set in the graph consisting of the equilibria $(1,0)$, $(-2,0)$, and the origin, the connection from $(1,0)$ to the origin, and the connection from $(-2,0)$ to the origin. A  ($<_7$-ordered) Morse decomposition of $S^7$ is a collection $M(S^7) = \{ M(p) \mid p \in \mathcal{P}_7 = \{ 1,2,3\} \}$ such that $M(1) = \{(-2,0)\}$ is a saddle,  $M(2) = \{(1,0)\}$ is a saddle, and   $M(3) = \{(0,0)\}$ is a source, with flow ordering  $M(1) <_7 M(3)$ and $M(2) <_7 M(3)$.
  
 Let $\Delta_7^1$ be the connection matrix after  second heteroclinic  bifurcation with admissible ordering $<_7$, then
 \begin{equation*}
\Delta_7^1=
\begin{array}{cc}
 &   \begin{array}{ccc} H^1_1(1) &   H^1_1(2)   &   H^1_2(3) \end{array}
\\
\begin{array}{c} H^1_1(1) \\ H^1_1(2)  \\ H^1_2(3) \end{array} &  
\left( \begin{array}{c@{\extracolsep{2.9em}}c@{\extracolsep{3.4em}}c}
0     &  0   &   \cong
\\
   0     &      0    &   \cong
\\
 0      & 0        &   0
\end{array}  \right)
\end{array}.
\end{equation*}
Let $T_{6,7}$ be the transition matrix for the flow in figure \ref{figure.hetero2. example2}. Then
 \begin{equation*}
T_{6,7} =
\begin{array}{cc}
 &   \begin{array}{ccc} H^1_1(1) & H^1_1(2)   &  H^1_2(3) \end{array}
\\
\begin{array}{c} H^0_1(1) \\ H^0_1(2)  \\ H^0_2(3)\end{array} &  
\left( \begin{array}{c@{\extracolsep{2.4em}}c@{\extracolsep{2.0em}}c}
 \cong    &  T(1,2)   & 0 
\\
   0    &  \cong   & 0 
\\
 0     &  0   &   \cong
\end{array}  \right)
\end{array}.
\end{equation*}
By \eqref{Delta^2 = 0} we have  $\Delta^0_6 T_{6,7} +T_{6,7} \Delta^1_7 = 0$ which gives us 
\begin{equation*}
  \cong \cdot\; T(1,2)\; + \cong \cdot \cong \; = 0, \Rightarrow T(1,2) = \;\cong.  
\end{equation*}
Then by theorem 3.13  \cite{Reineck88} we have  $\partial \bigl(M(2),M(1)\bigr) \neq ~0$, which implies to $C\bigl(M(2),M(1),\bigr) \neq \emptyset$. Therefore, the saddles are connected for some parameter value $\theta_{*3} \in (1.1 , 1.2)$.

\bibliographystyle{unsrt}  


\begin{thebibliography}{1}


 \bibitem{Johnson1997} 
 Johnson, L.
 \newblock Essential Medical Physiology.
 \newblock In {\em Lippincott Williams and Wilkins, London} 1997.
 
 \bibitem{Bub1995} 
Bub, Gil and Glass, Leon.
 \newblock Bifurcations in a discontinuous circle map: a theory for a chaotic cardiac arrhythmia.
 \newblock In {\em Internat. J. Bifur. Chaos Appl. Sci. Engrg.} pages 359--371. Volume 5, 1995.
 
  \bibitem{van1928} 
B. van der Pol and J. van der Mark.
 \newblock The heartbeat considered as a relaxation oscillation and an electrical model of the heart.
 \newblock In {\em Philos. Mag.} pages 763--775. Volume 6, 1928.
 
   \bibitem{West1985} 
B.J. West and A.L. Goldberger and  G. Rovner and V. Bhargava.
 \newblock Nonlinear dynamics of the heartbeat, The AV junction: passive conduit or active oscillator?.
 \newblock In {\em Physica D.} pages 198--206. Volume 17, 1985.
 
    \bibitem{Katholi1977} 
C.R. Katholi and  F. Urthaler and  J. Macy Jr. and T.N. James.
 \newblock A mathematical model of automaticity in the sinus node and the AV junction based on weakly coupled relaxation oscillators.
 \newblock In {\em Comput. Biomed. Res.} pages 529--543, 1977.
 
   \bibitem{Bernardo1998} 
D. di Bernardo and M.G. Signorini and S. Cerutti.
 \newblock A model of two nonlinear coupled oscillators for the study of heartbeat dynamics.
 \newblock In {\em Int. J. Bifurcations Chaos} pages 1975--1985. Volume 8, 1998.

\bibitem{Postnov1999} 
Postnov, Dmitry and Han, Seung Kee and Kook, Hyungtae.
 \newblock Synchronization of diffusively coupled oscillators near the homoclinic bifurcation.
 \newblock In {\em Phys. Rev. E} pages 2799--2807. Volume 60, 1999.

\bibitem{Krzysztof2004} 
Grudzi\'nski, Krzysztof and \.Zebrowski, Jan J..
 \newblock Modeling cardiac pacemakers with relaxation oscillators.
 \newblock In {\em Phys. A} pages 153--162. Volume 336, 2004.
 
\bibitem{Hubbard1995} 
Hubbard, John H. and West, Beverly H..
\newblock Differential equations: a dynamical systems approach.
 \newblock In {\em Springer-Verlag, New York} pages xiv+601. Volume 18, 1995.
 
\bibitem{Conley78} 
Conley, Charles.
\newblock Isolated invariant sets and the Morse index.
 \newblock In {\em American Mathematical Society, Providence, R.I.} pages iii+89. Volume 38, 1978.
 
\bibitem{Salamon85} 
Salamon, Dietmar.
\newblock Connected simple systems and the {C}onley index of isolated.
 \newblock In {\em Trans. Amer. Math. Soc.} pages 1--41. Volume 291, 1985.

\bibitem{Franzosa1} 
Franzosa, Robert.
\newblock Index filtrations and the homology index braid for partially ordered {M}orse decompositions.
 \newblock In {\em Trans. Amer. Math. Soc.} pages 193--213. Volume 298, 1986.

\bibitem{Franzosa2} 
Franzosa, Robert D..
\newblock The connection matrix theory for {M}orse decompositions.
 \newblock In {\em Trans. Amer. Math. Soc.} pages 561--592. Volume 311, 1989.
 
\bibitem{Franzosa3} 
Franzosa, Robert D..
\newblock The continuation theory for {M}orse decompositions and connection matrices.
 \newblock In {\em Trans. Amer. Math. Soc.} pages 781--803. Volume 310, 1988.

\bibitem{Reineck88} 
Reineck, James F..
\newblock Connecting orbits in one-parameter families of flows.
 \newblock In {\em Ergodic Theory Dynam. Systems} pages 359--374. Volume 8$^*$, 1988.

\bibitem{Mischaikow88} 
Mischaikow, Konstantin.
\newblock Existence of generalized homoclinic orbits for one parameter families of flows.
 \newblock In {\em Proc. Amer. Math. Soc.} pages 59--68. Volume 103, 1988.

\bibitem{Jawarneh18} 
Jawarneh, Ibrahim.
\newblock Applications of the Conley index to bifurcations in selected biological systems.
 \newblock In {\em ProQuest LLC, Ann Arbor, MI} pages 134, 2018.

\bibitem{Mischaikow02} 
Mischaikow, Konstantin and Mrozek, Marian.
\newblock Conley index. Handbook of dynamical systems, {V}ol. 2.
 \newblock In {\em North-Holland, Amsterdam} pages 393--460, 2002.

\bibitem{Moulay11} 
Moulay, Emmanuel and Hui, Qing.
\newblock Conley index condition for asymptotic stability.
 \newblock In {\em Nonlinear Anal.} pages 4503--4510.Volume 74, 2011.

\bibitem{McCord97} 
McCord, Christopher and Reineck, James F..
\newblock Connection matrices and transition matrices.
 \newblock In {\em Polish Acad. Sci. Inst. Math., Warsaw} pages 41--55. Volume 47, 1999.

\bibitem{Mischaikow89} 
Mischaikow, Konstantin.
\newblock Transition systems.
 \newblock In {\em Proc. Roy. Soc. Edinburgh Sect. A} pages 155--175. Volume 112, 1989.

\end{thebibliography}

\end{document}